\pdfoutput=1
\providecommand{\noopsort}[1]{} 

\documentclass{amsart}

\usepackage[mathscr]{eucal}
\usepackage{amssymb}
\usepackage[dvipsnames]{xcolor} 
\usepackage[normalem]{ulem}
\usepackage{amsthm}
\usepackage{bbold}
\usepackage{comment}
\usepackage{enumitem}
\usepackage{amsmath}
\usepackage{tikz-cd}


\usepackage[unicode]{hyperref} 
\hypersetup{
    colorlinks, linkcolor=Blue,
    citecolor=Blue, urlcolor=Blue,
    breaklinks=true
}

\usepackage[nameinlink,capitalise,noabbrev]{cleveref}


\overfullrule=1mm


\numberwithin{equation}{section}
\setcounter{tocdepth}{1}


\usepackage[all]{xy}
\xyoption{line}
\usepackage{graphicx}
\usepackage{mathtools}



\swapnumbers 

\newtheorem{Thm}[equation]{Theorem}
\newtheorem*{Thm*}{Theorem}
\newtheorem*{Q*}{Question}
\newtheorem{Prop}[equation]{Proposition}
\newtheorem{Lem}[equation]{Lemma}
\newtheorem{Cor}[equation]{Corollary}

\theoremstyle{remark}
\newtheorem{Def}[equation]{Definition}
\newtheorem*{Def*}{Definition}

\newtheorem{Not}[equation]{Notation}
\newtheorem{Exa}[equation]{Example}
\newtheorem{Hyp}[equation]{Hypothesis}
\newtheorem{Rec}[equation]{Recollection}
\newtheorem{Rem}[equation]{Remark}

\tikzset{
    labelrotatebelow/.style={anchor=north, rotate=90, inner sep=1.0mm}
}
\tikzset{
    labelrotateabove/.style={anchor=south, rotate=90, inner sep=1.0mm}
}

\newcommand{\nc}{\newcommand}
\nc{\rnc}{\renewcommand}
\nc{\dmo}{\DeclareMathOperator}

\rnc{\emptyset}{\varnothing}
\nc{\Mid}{\,\big|\,}
\nc{\SET}[2]{\big\{\,#1\Mid#2\,\big\}} 

\dmo{\Ker}{Ker}
\dmo{\End}{End}
\dmo{\Hom}{Hom}
\dmo{\im}{im}
\dmo{\id}{id}
\nc{\inv}{^{-1}}

\nc{\hook}{\hookrightarrow}
\nc{\adjto}{\rightleftarrows}
\nc{\xra}{\xrightarrow}
\nc{\xhook}{\xhookrightarrow}
\nc{\xtwohead}[1]{\overset{#1}{\twoheadrightarrow}}

\dmo{\gen}{gen}
\dmo{\cl}{cl}
\dmo{\loc}{loc}
\dmo{\con}{con}
\dmo{\stmod}{stmod}
\dmo{\StMod}{StMod}
\dmo{\Proj}{Proj}
\dmo{\SH}{SH}
\nc{\SHc}{{\SH^c}}
\nc{\SHp}{{\SH_{(p)}}}
\nc{\SHcp}{{\SH^c_{(p)}}}

\dmo{\Ann}{Ann}
\dmo{\WeakAss}{WeakAss}
\dmo{\Ass}{Ass}
\dmo{\modname}{mod}
\dmo{\Mod}{Mod}
\nc{\MMod}[1]{\Mod(#1)}
\nc{\mmod}[1]{\modname(#1)}
\dmo{\Der}{D}
\dmo{\Derqc}{D_{qc}}
\dmo{\Derb}{D^{b}}
\dmo{\Derperf}{D^{perf}}
\nc{\Rder}{\mathrm{R}}
\nc{\Lder}{\mathrm{L}}

\dmo{\cone}{cone}
\dmo{\supp}{supp}
\dmo{\Supp}{Supp}
\nc{\SuppBIK}{\Supp_{\textup{BIK}}}
\nc{\kos}[2]{{#1/\!\!/#2}}
\dmo{\Cosupp}{Cosupp}
\nc{\haux}{\mathrm{h}}
\nc{\Supph}{\Supp^\haux}
\nc{\Spch}{\Spc^\haux}
\dmo{\Spec}{Spec}
\dmo{\Spech}{Spec^h}
\dmo{\Spc}{Spc}

\nc{\ideal}[1]{\langle #1\rangle}
\dmo{\thick}{thick}
\nc{\thickt}[1]{\thick_\otimes\ideal{#1}}
\nc{\Loc}[1]{\operatorname{Loc}\ideal{#1}}
\nc{\Loco}[1]{\operatorname{Loc}_{\otimes}\hspace{-0.3ex}\ideal{#1}}
\nc{\Colocid}[1]{\operatorname{Colocid}\ideal{#1}}

\dmo{\rmH}{H}
\nc{\HFp}{{\rmH \hspace{-0.15em}\bbF_{\hspace{-0.1em}p}}}

\nc{\frakm}{\mathfrak{m}}
\nc{\frakp}{\mathfrak{p}}
\nc{\frakq}{\mathfrak{q}}
\nc{\fraka}{\mathfrak{a}}
\nc{\frakb}{\mathfrak{b}}
\nc{\bbN}{\mathbb{N}}
\nc{\bbZ}{\mathbb{Z}}
\nc{\bbF}{\mathbb{F}}
\nc{\cal}[1]{\mathcal{#1}}
\nc{\cat}[1]{\mathscr{#1}}
\nc{\unit}{\mathbb{1}}

\nc{\ie}{{i.e.}, }
\nc{\cf}{{cf.~}}
\nc{\aka}{{a.\,k.\,a.}\ }
\nc{\eg}{{\sl e.\,g.}}
\nc{\loccit}{{\sl loc.\,cit.}\ }

\dmo{\Ind}{Ind}
\dmo{\Map}{Map}
\dmo{\Ho}{Ho}
\dmo{\CAlg}{CAlg}
\dmo{\Fun}{Fun}
\nc{\Prst}{{\mathsf{Pr}^{L}_{\mathrm{st}}}}
\nc{\Sp}{\mathscr{S}p}

\newcounter{enum-resume-hack}


\begin{document}

\title[Convexity in tensor triangular geometry]{Convexity in tensor triangular geometry}

\author{Changhan Zou}

\date{August 7, 2025}

\address{Changhan Zou, Mathematics Department, UC Santa Cruz, 95064 CA, USA}
\email{czou3@ucsc.edu}
\urladdr{\href{https://people.ucsc.edu/~czou3}{https://people.ucsc.edu/$\sim$czou3}}

\begin{abstract}
We classify the dualizable localizing ideals of rigidly-compactly generated tt-$\infty$-categories that are cohomologically stratified. By definition, these are the localizing ideals that are dualizable with respect to the Lurie tensor product. We prove that these ideals correspond to the convex subsets of the Balmer spectrum. More generally, we establish this classification for categories which are locally cohomologically stratified and whose Balmer spectrum is noetherian. The classification thus applies to many categories arising in algebra and topology, including derived categories of noetherian schemes. Our result generalizes, and is motivated by, a recent theorem of Efimov which establishes this classification for derived categories of commutative noetherian rings.
\end{abstract}

\maketitle

\vspace{-3ex}
{
\hypersetup{linkcolor=black}
\tableofcontents
}
\vspace{-3ex}

\section{Introduction}
Hilbert's Nullstellensatz states that there is a one-to-one correspondence between the closed subsets of an affine variety and the radical ideals of its coordinate ring. This type of ideal--subset correspondence is common in mathematics. For example, inspired by the thick subcategory theorem \cite{HopkinsSmith98} in chromatic homotopy theory, Hopkins--Neeman \cite{Hopkins87, Neeman92a} classified the thick ideals of the derived category of perfect complexes over a commutative noetherian ring $R$. This provides an inclusion-preserving bijection
\[
\{\text{thick ideals of }\Derperf(R)\} \xra{\sim} 
\{\text{specialization closed subsets of }\Spec(R)\}.
\]
This classification was then extended to the unbounded derived category $\Der(R)$ by Neeman \cite{Neeman92a}: There is an inclusion-preserving bijection
\[
\{\text{localizing ideals of }\Der(R)\} \xra{\sim} \{\text{subsets of }\Spec(R)\}.
\]
The connection between these two results is that there is a one-to-one correspondence between the collection of thick ideals of $\Derperf(R)$ and the collection of localizing ideals of $\Der(R)$ that are generated by perfect complexes. Recently, Efimov \cite{Efimov24} refined Neeman's classification in his study of extensions of compactly generated stable $\infty$-categories. More precisely, Efimov showed that the localizing ideals of $\Der(R)$ that are dualizable with respect to the Lurie tensor product correspond to the \emph{convex} subsets of $\Spec(R)$. Putting all these results in one picture, we obtain:
\[
\begin{tikzcd}
\{\text{localizing ideals of }\Der(R)\} \arrow[r, "\sim", "\text{Neeman}"']                                                                                                       & \{\text{subsets of }\Spec(R)\}                                        \\
\left\{\begin{gathered} \text{dualizable localizing} \\ \text{ideals of }\Der(R) \end{gathered} \right\} \arrow[u, hook'] \arrow[r, "\sim", "\text{Efimov}"']                     & \left\{\begin{gathered} \text{convex subsets} \\ \text{of }\Spec(R) \end{gathered} \right\} \arrow[u, hook']                \\
\left\{\begin{gathered} \text{localizing ideals of }\Der(R) \\ \text{ generated by perfect complexes} \end{gathered} \right\} \arrow[u, hook'] \arrow[r, "\sim", "\text{Hopkins}"'] &\left\{\begin{gathered}
\text{specialization closed} \\
\text{subsets of }\Spec(R)
\end{gathered} \right\}.  \arrow[u, hook']
\end{tikzcd}
\]

The purpose of this article is to lift this picture to the realm of tensor triangular geometry. In \cite{Balmer05a}, Balmer constructed, out of any essentially small tensor triangulated category $\cat K$, a topological space $\Spc(\cat K)$, which is now called the Balmer spectrum. This space comes with a universal notion of support and it captures the global structure of $\cat K$ in the sense that the support induces an inclusion-preserving bijection
\[
\{\text{thick ideals of }\cat K\} \xra{\sim} \{\text{specialization closed subsets of }\Spc(\cat K)\}.
\]
Here we assume that $\cat K$ is rigid and that $\Spc(\cat K)$ is noetherian for expositional simplicity. This abstract classification theorem unifies major classification theorems from algebraic topology, algebraic geometry, and modular representation theory \cite{Thomason97, BensonCarlsonRickard97, HopkinsSmith98}. For instance, the classification of Hopkins reads: $\Spc(\Derperf(R)) \cong \Spec(R)$.

The category $\cat K$ often arises as the subcategory $\cat T^c$ of compact objects of a rigidly-compactly generated tensor triangulated category $\cat T$. In favorable circumstances, the spectrum $\Spc(\cat T^c)$ also controls the global structure of the bigger category $\cat T$. Namely, there exists an extension of the universal support theory to arbitrary objects of $\cat T$, and this provides a bijection
\begin{equation}\label{eq:strat-intro}
\{\text{localizing ideals of }\cat T\} \xra{\sim} \{\text{subsets of }\Spc(\cat T^c)\}.
\end{equation}
When this occurs, we say that $\cat T$ is \emph{stratified}. The study of stratified categories arose in the work of Hovey--Palmieri--Strickland \cite{HoveyPalmieriStrickland97} and Benson--Iyengar--Krause \cite{BensonIyengarKrause11b}. More recently, it has been systematically developed in the context of tensor triangular geometry by Barthel--Heard--Sanders \cite{BarthelHeardSanders23b}. For example, Neeman's theorem can be rephrased as saying that $\Der(R)$ is stratified. Under stratification, Balmer's classification of thick ideals can be interpreted as the bijection
\begin{equation}\label{eq:balmer-intro}
\left\{\begin{gathered} \text{localizing ideals of }\cat T\text{ generated} \\ \text{by compact objects of }\cat T \end{gathered} \right\} \xra{\sim} \left\{\begin{gathered}
\text{specialization closed} \\
\text{subsets of }\Spc(\cat T^c)
\end{gathered} \right\}.
\end{equation}
In practice, $\cat T$ is frequently observed as the homotopy category of a symmetric monoidal presentable stable $\infty$-category $\cat C$. When $\cat C$ is stratified, every localizing ideal of $\cat C$ is itself a presentable stable $\infty$-category. We can thus consider those which are dualizable with respect to the Lurie tensor product. This leads to the following:
\begin{Q*}
Let $\cat C$ be a rigidly-compactly generated tt-$\infty$-category that is stratified. Do the dualizable localizing ideals of $\cat C$ correspond to the convex subsets of $\Spc(\cat C^c)$?
\end{Q*}
\noindent We provide an affirmative answer to this question for a large class of categories:
\begin{Thm}\label{thm:intro}
Let $\cat C$ be a rigidly-compactly generated tt-$\infty$-category with $\Spc(\cat C^c)$ noetherian. If $\cat C$ is locally cohomologically stratified then the tensor triangular support induces a bijection
\[
\{\text{dualizable localizing ideals of }\cat C\} \xra{\sim} \{\text{convex subsets of }\Spc(\cat C^c)\}.
\]
Moreover, in this case, the dualizable localizing ideals of $\cat C$ are themselves compactly generated.
\end{Thm}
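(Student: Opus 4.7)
The plan is to identify dualizable localizing ideals of $\cat C$ with compactly generated ones and, via stratification, translate this into the subset side of $\Spc(\cat C^c)$. Under the standing hypotheses, one has the stratification bijection
\[
\{\text{localizing ideals of }\cat C\} \xra{\sim} \{\text{subsets of }\Spc(\cat C^c)\}, \qquad \cat L \mapsto \bigcup\nolimits_{X\in\cat L}\supp(X),
\]
with inverse $S \mapsto \cat L_S := \{X\in\cat C : \supp(X)\subseteq S\}$. It therefore suffices to show that $\cat L_S$ is dualizable if and only if $S$ is convex, and that in this case $\cat L_S$ is in fact compactly generated.

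For the direction that convexity yields compact generation (and hence dualizability), I would write $S = V\setminus W$ with $V\supseteq W$ both specialization closed, taking $V$ to be the specialization closure of $S$ and $W := V\setminus S$; the latter is specialization closed precisely because $S$ is convex. Since $\Spc(\cat C^c)$ is noetherian, both $V$ and $W$ are Thomason, so the associated smashing localizing ideals $\cat L_V$ and $\cat L_W$ are compactly generated by the compact objects of $\cat C$ they contain, and $\cat L_W\subseteq\cat L_V$ is compactly generated as a localizing subcategory. Neeman's localization theorem then yields a compactly generated Verdier quotient $\cat L_V/\cat L_W$, which via stratification is realized as the localizing ideal $\cat L_V\cap L_{W^c}(\cat C) = \cat L_S$ of $\cat C$ (where $L_{W^c}$ is the smashing localization away from $W$). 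Compactly generated presentable stable categories are dualizable in $\Prst$, completing this direction.

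The converse, dualizable implies convex, is the crux, and I expect the main obstacle to lie here. Suppose $\cat L$ is dualizable with non-convex support $S$, so there exist primes $\frakp\leadsto\frakq\leadsto\frakr$ with $\frakp,\frakr\in S$ and $\frakq\notin S$. I would localize around $\frakq$ by composing the smashing idempotents that cut out the generalizations of $\frakq$ (and the complement of its proper specializations), extracting a local tt-$\infty$-category $\cat C_\frakq$. By the \emph{cohomological} stratification hypothesis at $\frakq$, this local slice is governed by a Noetherian graded-local ring, placing the classification of its dualizable localizing ideals in the scope of Efimov's theorem. The image of $\cat L$ in $\cat C_\frakq$ should inherit dualizability (because the smashing idempotents employed are themselves dualizable in $\Prst$, so that tensoring with them preserves dualizability) while its support at $\frakq$ remains non-convex, contradicting the Efimov-style classification locally. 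The two technical hurdles I anticipate are: (a) verifying that dualizability in $\Prst$ descends through the relevant smashing localizations; and (b) making precise the identification of the cohomologically stratified slice $\cat C_\frakq$ with an Efimov input, so that his theorem applies essentially verbatim. Once these are settled, dualizability forces convexity, and combined with the easy direction the \emph{moreover} clause follows: a dualizable $\cat L$ has convex support, hence equals $\cat L_S$ for convex $S$, hence is compactly generated.
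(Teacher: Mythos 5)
Your first direction (convex $\Rightarrow$ compactly generated $\Rightarrow$ dualizable) is essentially the paper's own argument: it is \cref{prop:dualizable}, obtained from \cref{lem:convex} and \cref{lem:quotient} exactly as you describe, and the \emph{moreover} clause then follows once the converse is known. The converse is where your sketch has genuine gaps.

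First, you cannot apply Efimov's theorem ``essentially verbatim'' to the local slice. A cohomologically stratified local tt-category is controlled by its noetherian graded endomorphism ring through the Benson--Iyengar--Krause machinery, but it is in general \emph{not} equivalent to $\Der(A)$ for a commutative noetherian ring $A$, which is the only case Efimov's classification covers; in particular his key input \cite[Lemma~3.12]{Efimov24} (compactness of the lifted object) does not transfer. Producing a usable substitute is the technical heart of the paper: \cref{lem:matlis-lift}, \cref{lem:finite-generation} and \cref{cor:special-closed} establish the weaker statement that the relevant object has specialization closed support (via Brown representability, injective hulls over $R_{\cat T}$, and BIK support computations), and \cref{lem:notdualizable} then derives non-dualizability using Efimov's dualizable Neeman--Thomason theorem (\cref{thm:neeman--thomason}). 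So your ``hurdle (b)'' is not a verification to be settled but the missing main argument. Second, your reduction localizes at the wrong prime: the local category at the missing middle prime $\frakq$ has spectrum $\gen(\frakq)$, so the lower witness $\mathfrak{r}$ disappears and the image of $S$ there need not be non-convex (for a chain $\frakp\le\frakq\le\mathfrak{r}$ with $S=\{\frakp,\mathfrak{r}\}$ the image is $\{\frakp\}$, which is convex), so no contradiction results. The paper instead localizes at the \emph{special} point $\cat P\in S$, so that the closed point of the slice lies in $S'$, chooses $\cat Q$ maximal so that $S'\cap\overline{\{\cat Q'\}}\setminus\{\cat Q'\}$ is specialization closed (this is what makes Neeman--Thomason applicable inside $\cat C_{S_1}$), and then passes further to the colocalization onto $\overline{\{\cat Q'\}}$ before invoking \cref{lem:notdualizable}. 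Finally, the transfer of dualizability along these steps is not just ``the idempotents are dualizable'': one needs full faithfulness of base change along $\cat C\to\cat D$ (\cref{lem:fully-faithful}), closure of dualizability under relative tensor products (\cref{lem:dualizability}), and the support formulas identifying the image with $\cat D_{S'}$ and then with $\cat D_{S'\cap\overline{\{\cat Q'\}}}$.
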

\noindent This is proved as \cref{thm:main}; see \cref{def:loc-coh-strat} for the definition of locally cohomologically stratified. We therefore obtain the following picture:
\[
\begin{tikzcd}
\{\text{localizing ideals of }\cat C\} \arrow[r, "\sim", "\eqref{eq:strat-intro}"']                                                                                                       & \{\text{subsets of }\Spc(\cat C^c)\}                                        \\
\{\text{dualizable localizing } \text{ideals of }\cat C\} \arrow[u, hook'] \arrow[r, "\sim", "\eqref{thm:intro}"']                     & \{\text{convex subsets of }\Spc(\cat C^c)\} \arrow[u, hook']                \\
\left\{\begin{gathered} \text{localizing ideals of }\cat C\text{ generated} \\ \text{by compact objects of }\cat C \end{gathered} \right\} \arrow[u, hook'] \arrow[r, "\sim", "\eqref{eq:balmer-intro}"'] &\left\{\begin{gathered}
\text{specialization closed} \\
\text{subsets of }\Spc(\cat C^c)
\end{gathered} \right\}.  \arrow[u, hook']
\end{tikzcd}
\]
Every cohomologically stratified category satisfies the hypotheses of our theorem. This provides a large class of examples. On the other hand, our theorem also applies to derived categories of noetherian schemes since they are locally cohomologically stratified. This extends the affine case established by Efimov in \cite[Theorem~3.10]{Efimov24}.

The proof of our theorem is inspired by that of Efimov, although not all of his techniques transfer to our more general setting; see \cref{rem:proof}. One crucial input is Efimov's generalization of the Neeman--Thomason theorem, from compactly generated categories to dualizable categories; see \cref{thm:neeman--thomason}. Without this input, one can only show that the convex subsets correspond to the localizing ideals that are themselves compactly generated, which is a strictly weaker statement. Another aspect of our proof is using the local cohomological stratification condition to compute the support of objects explicitly via the results of \cite{BensonIyengarKrause08}.

\subsection*{Acknowledgements}
The author is grateful to Beren Sanders for all kinds of support, without whom this project could not be finished. He also thanks Xu Gao and Jiacheng Liang for helpful conversations. Finally, he would like to thank MPIM for organizing the \emph{Workshop on Dualizable Categories and Continuous K-theory} during which he learned about the work of Efimov.

\medskip
\section{Higher tt-categories}
In this section, we set the stage by reviewing necessary background on higher categories in tensor triangular geometry. The standard references are \cite{Lurie_HTT, Lurie_HA}.

\begin{Not}
For any $\infty$-category $\cat C$, we write $\Map_{\cat C}$ for the mapping space and $\Ho(\cat C)$ for its homotopy category. If $\cat C$ admits a symmetric monoidal structure, we write $\CAlg(\cat C)$ for the $\infty$-category of commutative algebras in $\cat C$.
\end{Not}

\begin{Rec}
Recall that $\Prst$ denotes the $\infty$-category of presentable stable $\infty$-categories and colimit-preserving functors. The following terminology comes from \cite[Section~5]{BCHNPS24}:
\end{Rec}

\begin{Def}
A \emph{big tt-$\infty$-category} is an object $\cat C \in \CAlg(\Prst)$. In other words, a big tt-$\infty$-category is a symmetric monoidal presentable stable $\infty$-category whose tensor commutes with colimits in each variable. A morphism of big tt-$\infty$-categories is a morphism in $\CAlg(\Prst)$; that is, a colimit-preserving symmetric monoidal functor.
\end{Def}

\begin{Rem}
For $\cat C \in \CAlg(\Prst)$ we write $\Mod_{\cat C} \coloneqq \Mod_{\cat C}(\Prst)$ for the $\infty$-category of $\cat C$-modules. The relative Lurie tensor product $\otimes_{\cat C}$ makes $\Mod_{\cat C}$ into a closed symmetric monoidal $\infty$-category with unit $\cat C$ and internal hom $\Fun^L_{\cat C}$.
\end{Rem}

\begin{Def}
We say that a $\cat C$-module $\cat M$ is \emph{dualizable over} $\cat C$ if $\cat M$ is a dualizable object in $\Mod_{\cat C}$. In particular, we say that $\cat M \in \Prst \simeq \Mod_{\Sp}$ is \emph{dualizable} when it is dualizable over the $\infty$-category of spectra.
\end{Def}

\begin{Exa}
Every compactly generated $\cat M \in \Prst$ is dualizable. Conversely, every dualizable $\cat M$ is a retract of a compactly generated category. See \cite[D.7.3.1]{Lurie18}.
\end{Exa}

\begin{Not}
For $\cat C \in \Prst$ we write $\cat C^c$ for the full subcategory of compact objects and for $\cat C \in \CAlg(\Prst)$ we write $\cat C^d$ for the full subcategory of dualizable objects. The following is a special case of \cite[Definition~4.5 and Definition~4.34]{Ramzi24}:
\end{Not}

\begin{Def}
A big tt-$\infty$-category $\cat C \in \CAlg(\Prst)$ is \emph{locally rigid} if $\cat C$ is dualizable and the multiplication $\cat C \otimes \cat C \to \cat C$ is an internal left adjoint in $\Mod_{\cat C \otimes \cat C}$. We say that $\cat C$ is \emph{rigid} if it is locally rigid and its tensor unit $\unit$ is compact.
\end{Def}

\begin{Rem}\label{rem:locally-rigid}
Suppose that $\cat C \in \CAlg(\Prst)$ is compactly generated. Then $\cat C$ is locally rigid if and only if $\cat C^c \subseteq \cat C^d$. This follows from \cite[Example~4.6]{Ramzi24}. Hence, $\cat C$ is rigid if and only if $\cat C^c \subseteq \cat C^d$ and $\unit \in \cat C^c$. This is the case exactly when $\cat C^c = \cat C^d$, by \cite[Lemma~4.50]{Ramzi24}.
\end{Rem}

\begin{Rem}\label{rem:term}
The terminology above is compatible with the terminology commonly used in the literature on tensor triangular geometry. Note that a stable $\infty$-category $\cat C$ is presentable if and only if the triangulated category $\Ho(\cat C)$ is well generated, and that $\cat C$ is compactly generated as an $\infty$-category if and only if $\Ho(\cat C)$ is compactly generated as a triangulated category; see \cite[Section~1.3]{Efimov24} and \cite[Lemma~A.8 and Remark~A.9]{DellAmbrogioMartos24}. Moreover, it follows from \cref{rem:locally-rigid} that a big tt-$\infty$-category $\cat C$ is rigid and compactly generated if and only if $\Ho(\cat C)$ is a rigidly-compactly generated tt-category\footnote{In the terminology of \cite{HoveyPalmieriStrickland97}, this is equivalent to $\Ho(\cat C)$ being a unital algebraic stable homotopy category. More generally, $\cat C$ is locally rigid and compactly generated if and only if $\Ho(\cat C)$ is an algebraic stable homotopy category.} in the usual sense of \cite{BalmerFavi11}. Furthermore, a functor $\cat C \to \cat D$ is a morphism of big tt-$\infty$-categories if and only if the induced functor $\Ho(\cat C) \to \Ho(\cat D)$ is a geometric functor in the sense of \cite{BarthelHeardSanders23b}.
\end{Rem}

\begin{Rem}
We record the following two lemmas for later use, whose special cases are implicitly used in the proof of \cite[Theorem~3.10]{Efimov24}.
\end{Rem}

\begin{Lem}\label{lem:fully-faithful}
Let $\cat C \to \cat D$ be a morphism in $\CAlg(\Prst)$. If $\cat D$ is dualizable and $\cat C$ is locally rigid, then the relative tensor product $-\otimes_{\cat C}\cat D \colon \Mod_{\cat C} \to \Mod_{\cat D}$ preserves fully faithful morphisms.
\end{Lem}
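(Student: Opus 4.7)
My plan is to identify the relative tensor functor $-\otimes_{\cat C}\cat D$ with an internal-hom functor and then reduce the statement to the formal observation that postcomposition with a fully faithful morphism is again fully faithful at the level of functor $\infty$-categories.

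The first and main step would be to upgrade dualizability of $\cat D$ in $\Prst$ to dualizability of $\cat D$ as an object of $\Mod_{\cat C}$. Here the hypothesis that $\cat C$ is locally rigid is essential: the defining property that the multiplication $\cat C \otimes \cat C \to \cat C$ is an internal left adjoint in $\Mod_{\cat C \otimes \cat C}$ implies that the forgetful functor $\Mod_{\cat C} \to \Prst$ preserves and detects dualizable objects. Write $\cat D^\vee \in \Mod_{\cat C}$ for the resulting $\cat C$-linear dual of $\cat D$. By standard closed-monoidal category theory, I then obtain a natural equivalence $-\otimes_{\cat C} \cat D \simeq \Fun^L_{\cat C}(\cat D^\vee, -)$ of functors $\Mod_{\cat C} \to \Mod_{\cat D}$, since the internal hom out of a dualizable object is given by tensoring with its dual. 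Under this equivalence, $i \otimes_{\cat C} \id_{\cat D} \colon \cat M \otimes_{\cat C} \cat D \to \cat N \otimes_{\cat C} \cat D$ corresponds to postcomposition with $i$.

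To conclude, I would invoke the observation that, given a fully faithful morphism $i \colon \cat M \to \cat N$, postcomposition induces a fully faithful functor $i_* \colon \Fun^L_{\cat C}(\cat D^\vee, \cat M) \to \Fun^L_{\cat C}(\cat D^\vee, \cat N)$. Indeed, mapping spaces in these functor $\infty$-categories are expressed as limits of pointwise mapping spaces, and $i$ induces pointwise equivalences on mapping spaces by fully faithfulness. The main obstacle I anticipate is the first step—transferring dualizability across the locally rigid base $\cat C$—after which the rest of the argument is formal.
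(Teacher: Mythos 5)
Your proposal is correct and follows essentially the same route as the paper: the paper likewise uses local rigidity of $\cat C$ (via a result of Ramzi) to upgrade dualizability of $\cat D$ in $\Prst$ to dualizability over $\cat C$, identifies $-\otimes_{\cat C}\cat D \simeq \Fun^L_{\cat C}(\Fun^L_{\cat C}(\cat D,\cat C), -)$, and concludes since this internal-hom functor preserves fully faithful morphisms. The only difference is cosmetic: the paper cites the transfer-of-dualizability step rather than sketching it.
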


\begin{proof}
By \cite[Proposition~4.17]{Ramzi24}, $\cat D$ is dualizable over $\cat C$, so we have an equivalence $-\otimes_{\cat C}\cat D \simeq \Fun^L_{\cat C}(\Fun^L_{\cat C}(\cat D, \cat C), -)$. The latter preserves fully faithful morphisms, which completes the proof.
\end{proof}

\begin{Lem}\label{lem:dualizability}
Consider $\cat C \in \CAlg(\Prst)$ and $\cat C$-modules $\cat M$ and $\cat N$. If $\cat C$, $\cat M$, and $\cat N$ are dualizable, then so is $\cat M\otimes_{\cat C}\cat N$.
\end{Lem}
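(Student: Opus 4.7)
The plan is to express the relative tensor product as a colimit of manifestly dualizable objects of $\Prst$ and then invoke closure properties of the class of dualizable presentable stable $\infty$-categories.

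First, I would recall that the relative tensor product in $\Mod_{\cat C}$ is computed in $\Prst$ as the geometric realization of the two-sided bar construction
\[
\cat M \otimes_{\cat C} \cat N \simeq \bigl\lvert [n] \mapsto \cat M \otimes \cat C^{\otimes n} \otimes \cat N \bigr\rvert.
\]
Each level of this simplicial object is a tensor product in $\Prst$ of objects assumed dualizable, and is therefore itself dualizable, since the dualizable objects of any symmetric monoidal $\infty$-category form a symmetric monoidal subcategory. Equivalently, since $\cat C$ is commutative, one can collapse the bar construction to a single base change
\[
\cat M \otimes_{\cat C} \cat N \simeq (\cat M \otimes \cat N) \otimes_{\cat C \otimes \cat C} \cat C,
\]
where $\cat M \otimes \cat N$ is a $\cat C \otimes \cat C$-module via the two separate actions and $\cat C$ is a $\cat C \otimes \cat C$-module via the multiplication map. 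Either way, the problem is reduced to showing that a suitable colimit in $\Prst$ of dualizable objects remains dualizable.

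The main obstacle, and the heart of the argument, is concluding dualizability of the resulting colimit in $\Prst$. This is exactly where the hypothesis that $\cat C$ (not merely $\cat M$ and $\cat N$) is dualizable becomes essential: without it, neither the individual levels $\cat M \otimes \cat C^{\otimes n} \otimes \cat N$ would be dualizable, nor would the base-change description above sit inside the class of dualizable objects. I would invoke Ramzi's calculus of dualizable presentable stable $\infty$-categories \cite{Ramzi24}: the class of dualizables in $\Prst$ enjoys closure properties that accommodate relative tensor products over dualizable commutative algebras of modules that are themselves dualizable. Applying this closure to the presentation of $\cat M \otimes_{\cat C} \cat N$ above delivers the desired conclusion.
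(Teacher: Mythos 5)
Your setup coincides with the paper's: both express $\cat M\otimes_{\cat C}\cat N$ as the geometric realization of the two-sided bar construction whose levels $\cat M\otimes\cat C^{\otimes n}\otimes\cat N$ are tensor products over $\Sp$, and both correctly note that each level is dualizable because dualizable objects of $\Prst$ are closed under $\otimes$. The problem is your final step. You correctly identify that the heart of the matter is showing that the resulting colimit in $\Prst$ is still dualizable, but you then dispose of it by invoking ``closure properties that accommodate relative tensor products over dualizable commutative algebras of modules that are themselves dualizable'' --- that closure property \emph{is} the lemma being proved, so as written the argument is circular, and the bar-construction reduction does no work. The concrete missing input, which is what the paper cites, is Efimov's result that the class of dualizable presentable stable $\infty$-categories is closed under colimits in $\Prst$ (in particular under geometric realizations); with that statement in hand, levelwise dualizability of the bar construction immediately yields the conclusion. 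This is a genuine theorem, not a formal closure property, so it must be named (or proved) rather than gestured at.

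A smaller point: your proposed alternative reduction $\cat M\otimes_{\cat C}\cat N\simeq(\cat M\otimes\cat N)\otimes_{\cat C\otimes\cat C}\cat C$ does not simplify anything, since it replaces one relative tensor product of dualizable modules over a dualizable commutative algebra by another of exactly the same shape; to conclude from it you would again need either the closure-under-colimits theorem or the lemma itself. If you replace the vague citation by the precise closure-under-colimits statement, your argument becomes the paper's proof verbatim.
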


\begin{proof}
Recall from \cite[Section~4.4]{Lurie_HA} that the relative tensor product is computed as the colimit of the geometric realization of a two-sided Bar construction in which the tensors are over $\Sp$. Therefore, $\cat M\otimes_{\cat C}\cat N$ is a colimit of dualizable categories and hence is itself dualizable by \cite[Proposition~1.65]{Efimov24}.
\end{proof}

\section{Stratification in tt-geometry}
We now briefly recall the theory of stratification in tensor triangular geometry. More details can be found in \cite{BarthelHeardSanders23b}. We take for granted some familiarity with the Balmer spectrum of an essentially small rigid tt-category as in \cite{Balmer05a}.

\begin{Hyp}
In this section, $\cat T$ will denote a rigidly-compactly generated tt-category whose spectrum $\Spc(\cat T^c)$ is weakly noetherian. We will occasionally assume that there is an underlying (rigidly-compactly generated) tt-$\infty$-category $\cat C$ and write $\cat T = \Ho(\cat C)$ to indicate this.
\end{Hyp}

\begin{Rec}
Recall from \cite{BarthelHeardSanders23b} that there exists an idempotent object $g_{\cat P}$ for each $\cat P \in \Spc(\cat T^c)$ such that
\[
g_{\cat P} \otimes g_{\cat Q} \neq 0 \text{ for } \cat P \neq \cat Q.
\]
Moreover, the tensor triangular support of an object $t \in \cat T$ is defined to be the set
\[
\Supp(t) \coloneqq \SET{\cat P \in \Spc(\cat T^c)}{t \otimes g_{\cat P} \neq 0}.
\]
For any localizing ideal $\cat L$, we set
\[
\Supp(\cat L) \coloneqq \bigcup_{t \in \cat L} \Supp(t) \subseteq \Spc(\cat T^c).
\]
\end{Rec}

\begin{Not}
For any subset $S \subseteq \Spc(\cat T^c)$ we write
\[
\cat T_S \coloneqq \SET{t \in \cat T}{\Supp(t) \subseteq S}
\]
for the localizing ideal of objects supported on $S$.
\end{Not}

\begin{Def}
The category $\cat T$ is said to be \emph{stratified} if the map
\[
\big\{ \text{localizing ideals of $\cat T$}\big\} \to \big\{\text{subsets of $\Spc(\cat T^c)$}\big\} 
\]
sending $\cat L$ to $\Supp(\cat L)$ is a bijection. The inverse is given by sending $S$ to $\cat T_S$.
\end{Def}

\begin{Rem}
When $\cat T$ is stratified, the localizing ideal $\cat T_S$ is generated by the set of objects $\SET{g_{\cat P} \in \cat T}{\cat P \in S}$. Hence, $\cat T_S$ is well generated; see \cite[Remark~A.9]{DellAmbrogioMartos24}, for example. If $\cat T = \Ho(\cat C)$ then the $\infty$-category $\cat C_S$ underlying $\cat T_S$ is presentable, in view of \cref{rem:term}.
\end{Rem}

\begin{Rem}
Suppose that $\cat T$ is stratified and let $S_2 \subseteq S_1$ be subsets of $\Spc(\cat T^c)$. Since $\cat T_{S_1}$ and $\cat T_{S_2}$ are well generated, the Verdier quotient $q \colon \cat T_{S_1} \to \cat T_{S_1} / \cat T_{S_2}$ is a Bousfield localization. This follows from the results of \cite{Neeman01}; see also \cite[Theorem~2.13]{BCHS25}. Therefore, we have
\begin{equation}\label{eq:right-ortho}
\cat T_{S_1} / \cat T_{S_2} \simeq \SET{t \in \cat T_{S_1}}{\Hom_{\cat T}(x, t) = 0 \text{ for all } x \in \cat T_{S_2}} = \cat T_{S_1} \cap \cat T_{S_2}^{\perp}.
\end{equation}
If $\cat T = \Ho(\cat C)$ then the Verdier quotient $\cat C_{S_1} / \cat C_{S_2}$ can be defined as the cofiber of the inclusion $\cat C_{S_2} \hook \cat C_{S_1}$ in $\Prst$. Hence $\cat T_{S_1} / \cat T_{S_2} \simeq \Ho(\cat C_{S_1} / \cat C_{S_2})$. See \cite[Section~5]{BlumbergGepnerTabuada13} for further details.
\end{Rem}

\begin{Exa}[Finite localization]\label{exa:finite-local}
Suppose that $\cat T$ is stratified with $\Spc(\cat T^c)$ noetherian. The specialization closed subsets of $\Spc(\cat T^c)$ correspond to the localizing ideals of $\cat T$ generated by compact objects of $\cat T$. Indeed, for any specialization closed subset $Y$, the localizing ideal $\cat T_Y$ is generated by $\cat T_Y^c \coloneqq \SET{x \in \cat T^c}{\supp(x) \subseteq Y}$. The associated Bousfield localization
\[
\cat T_Y \xhook{i} \cat T \xtwohead{q} \cat T / \cat T_Y
\]
is called a finite localization. The Neeman--Thomason theorem (\cite[Theorem~2.1]{Neeman96}) asserts that the induced functor on compact objects $\cat T^c \to (\cat T / \cat T_Y)^c$ is essentially surjective up to direct summands. Indeed, if $x \in (\cat T / \cat T_Y)^c$ then there exists some $a \in \cat T^c$ such that $x \oplus \Sigma x \cong q(a)$. It follows that the functor $\cat T^c \to (\cat T / \cat T_Y)^c$ induces an embedding
\[
\varphi \colon \Spc((\cat T / \cat T_{Y})^c) \hook \Spc(\cat T^c)
\]
on Balmer spectra whose image is the complement of $Y$. Moreover, by \cite[Lemma~2.13]{BarthelHeardSanders23b} we have
\begin{equation}\label{eq:supp-finite-loc}
\Supp(q^Rq(t)) = Y^c \cap \Supp(t) \quad \text{and} \quad \Supp(i^R i(t)) = Y \cap \Supp(t)
\end{equation}
where $q^R$ denotes the fully faithful right adjoint of $q$ and $i^R$ the right adjoint of $i$. It then follows from stratification that
\begin{equation}\label{eq:right-ortho-y}
\cat T / \cat T_Y \simeq \cat T_Y^{\perp} \simeq \im q^R = \cat T_{Y^c}.
\end{equation}
More generally:
\end{Exa}

\begin{Lem}\label{lem:quotient}
Suppose that $\cat T$ is stratified with $\Spc(\cat T^c)$ noetherian. Let $S_2 \subseteq S_1$ be subsets of $\Spc(\cat T^c)$. If $S_2$ is specialization closed then we have
\[
\cat T_{S_1} / \cat T_{S_2} \simeq \cat T_{S_1\setminus S_2}.
\]
\end{Lem}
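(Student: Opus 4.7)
The plan is to assemble the result from identifications already established in the preceding text. Since $\cat T$ is stratified and $S_2 \subseteq S_1$, both $\cat T_{S_1}$ and $\cat T_{S_2}$ are well generated, so the Bousfield localization description \eqref{eq:right-ortho} applies and yields
\[
\cat T_{S_1}/\cat T_{S_2} \simeq \cat T_{S_1} \cap \cat T_{S_2}^{\perp},
\]
with the right orthogonal taken in $\cat T$.

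Next, I would feed in the specialization-closed hypothesis on $S_2$. By \cref{exa:finite-local}, the inclusion $\cat T_{S_2} \hook \cat T$ is then a finite localization, and \eqref{eq:right-ortho-y} identifies $\cat T_{S_2}^{\perp}$ with $\cat T_{S_2^c}$. Substituting gives
\[
\cat T_{S_1}/\cat T_{S_2} \simeq \cat T_{S_1} \cap \cat T_{S_2^c}.
\]

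Finally, unwinding the definition $\cat T_S = \SET{t \in \cat T}{\Supp(t) \subseteq S}$, an object lies in $\cat T_{S_1} \cap \cat T_{S_2^c}$ precisely when its support is contained in $S_1 \cap S_2^c = S_1 \setminus S_2$. Hence $\cat T_{S_1} \cap \cat T_{S_2^c} = \cat T_{S_1\setminus S_2}$, which completes the chain of equivalences. I do not anticipate any real obstacle here: the argument is a routine concatenation of \eqref{eq:right-ortho}, \eqref{eq:right-ortho-y}, and the defining property of $\cat T_S$, with the specialization-closed hypothesis entering only to trigger the finite-localization description of $\cat T_{S_2}^{\perp}$ as $\cat T_{S_2^c}$.
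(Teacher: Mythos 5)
Your proposal is correct and follows essentially the same route as the paper: both reduce to $\cat T_{S_1} \cap \cat T_{S_2}^{\perp}$ via \eqref{eq:right-ortho}, identify $\cat T_{S_2}^{\perp}$ with $\cat T_{S_2^c}$ via the finite localization \eqref{eq:right-ortho-y} (the paper just inserts the intermediate step $\cat T \cap \cat T_{S_2}^{\perp} \simeq \cat T/\cat T_{S_2}$), and conclude from the definition of $\cat T_S$ that $\cat T_{S_1} \cap \cat T_{S_2^c} = \cat T_{S_1\setminus S_2}$.
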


\begin{proof}
Observe that
\[
\cat T_{S_1} / \cat T_{S_2} \simeq \cat T_{S_1} \cap \cat T_{S_2}^{\perp} = \cat T_{S_1} \cap (\cat T \cap \cat T_{S_2}^{\perp}) \simeq \cat T_{S_1} \cap \cat T / \cat T_{S_2} \simeq \cat T_{S_1} \cap \cat T_{S_2^c} = \cat T_{S_1\setminus S_2}
\]
where the first and second equivalence uses \eqref{eq:right-ortho} and the third equivalence is due to \eqref{eq:right-ortho-y}.
\end{proof}

\begin{Exa}[Local categories]\label{exa:local-cat}
Suppose that $\cat T$ is stratified with $\Spc(\cat T^c)$ noetherian. For $\cat P \in \Spc(\cat T^c)$, we denote by $Y_{\cat P}$ the complement of $\gen(\cat P)$, the set of generalizations of $\cat P$. Note that $Y_{\cat P}$ is the largest specialization closed subset not containing $\cat P$. By \cref{exa:finite-local} the finite localization $\cat T \to \cat T / \cat T_{Y_{\cat P}} \eqqcolon \cat T_{\cat P}$ induces an embedding $\varphi \colon \Spc(\cat T_{\cat P}^c) \hook \Spc(\cat T^c)$ on Balmer spectra whose image is $\gen(\cat P)$. Hence, the category $\cat T_{\cat P}$ is \emph{local} in the sense that the spetrum $\Spc(\cat T_{\cat P}^c)$ admits a unique closed point --- see \cite[Definition~1.25]{BarthelHeardSanders23b}. For any object $t \in \cat T$ we will write $t_{\cat P}$ for its image in $\cat T_{\cat P}$.
\end{Exa}

\begin{Rem}
An observation due to Efimov \cite[Proposition~1.18]{Efimov24} is that the Neeman--Thomason localization theorem holds not only for compactly generated categories but also for the dualizable ones:
\end{Rem}

\begin{Thm}[Efimov]\label{thm:neeman--thomason}
Let $\cat C \in \Prst$ be dualizable and $\cat S$ a full stable subcategory of $\cat C^c$. The localization $q: \cat C \to \cat C / \Ind(\cat S)$ induces an equivalence between $(\cat C / \Ind(\cat S))^c$ and the idempotent completion of $\cat C^c / \cat S$. Moreover, for any compact object $x \in (\cat C / \Ind(\cat S))^c$, there exists a compact object $a \in \cat C^c$ such that $q(a) \cong x \oplus \Sigma x$.
\end{Thm}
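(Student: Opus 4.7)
The theorem has two parts: (i) the lifting statement, that every compact $x \in (\cat C/\Ind(\cat S))^c$ admits a compact $a \in \cat C^c$ with $q(a) \simeq x \oplus \Sigma x$; and (ii) the identification of $(\cat C/\Ind(\cat S))^c$ with the idempotent completion of $\cat C^c/\cat S$. The plan is to prove (i) first, since (ii) follows from it formally.

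For the formal deduction: the functor $q$ preserves compact objects, since its right adjoint $q^R$ preserves all colimits (being the right adjoint of a Bousfield localization in $\Prst$ whose kernel $\Ind(\cat S)$ is generated by compacts), so $q$ induces a functor $\bar F \colon \cat C^c/\cat S \to (\cat C/\Ind(\cat S))^c$. Full faithfulness of $\bar F$ is the standard roof-calculus argument for Verdier quotients: any morphism $q(a) \to q(b)$ between images of compacts is represented by a roof $a \leftarrow a' \to b$ whose left leg has fiber in $\Ind(\cat S)$, and compactness of $a$ combined with writing objects of $\Ind(\cat S)$ as filtered colimits of objects of $\cat S$ lets one refine the roof so that the fiber already lies in $\cat S$. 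Granting (i), the image of $\bar F$ is dense in $(\cat C/\Ind(\cat S))^c$, which is idempotent complete as a full subcategory of a stable presentable $\infty$-category, so passing to idempotent completions yields (ii).

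The lifting statement (i) is the crux. My plan is to reduce to the classical Neeman--Thomason theorem \cite[Theorem~2.1]{Neeman96} by exploiting Lurie's characterization of dualizable objects of $\Prst$ as retracts of compactly generated ones \cite[D.7.3.1]{Lurie18}. Choose a compactly generated $\tilde{\cat C} \in \Prst$ together with morphisms $i \colon \cat C \adjto \tilde{\cat C} \colon r$ in $\Prst$ satisfying $r \circ i \simeq \id_{\cat C}$. Lift $\cat S$ to a thick subcategory $\cat S' \subseteq \tilde{\cat C}^c$, and arrange so that the retraction $(i,r)$ descends to a retraction $\cat D \adjto \tilde{\cat D}$ in $\Prst$, where $\cat D := \cat C/\Ind(\cat S)$ and $\tilde{\cat D} := \tilde{\cat C}/\Ind(\cat S')$. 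Given a compact $x \in \cat D^c$, lift it to a compact $\tilde x \in \tilde{\cat D}^c$ via this retraction; then the classical Neeman--Thomason theorem applied to $\tilde{\cat C} \to \tilde{\cat D}$ yields $\tilde a \in \tilde{\cat C}^c$ whose image in $\tilde{\cat D}$ is $\tilde x \oplus \Sigma \tilde x$. Finally, push down by $r$ (arranged to preserve compact objects via dualizability) to obtain $a := r(\tilde a) \in \cat C^c$ with $q(a) \simeq x \oplus \Sigma x$ in $\cat D$.

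The main obstacle is the reduction itself: constructing the thick lift $\cat S'$, ensuring that the retraction $(i,r)$ descends to a retraction between the quotients in $\Prst$, and arranging that $r$ preserves compact objects. Each of these uses the dualizability hypothesis essentially --- this is where the weaker statement, giving only the compactly generated convex ideals, breaks down without Lurie's realization of dualizability as retract-compact generation. Careful bookkeeping is also needed to control the extra $\oplus \Sigma$ correction produced by applying Neeman--Thomason inside $\tilde{\cat D}$ relative to the outer lift of $x$ to $\tilde x$; the expectation is that these corrections compose to yield the single $\oplus \Sigma$ correction in the final conclusion.
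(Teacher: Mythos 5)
The paper does not actually prove this statement: it is imported wholesale from Efimov \cite[Proposition~1.18]{Efimov24}, so there is no internal argument to compare yours with, and your proposal has to be judged on its own terms. Your preliminary reductions are fine. Since $\cat S$ consists of objects compact in $\cat C$, the quotient is a Bousfield localization whose right adjoint preserves colimits, so $q$ preserves compacts; the roof/colimit computation of mapping spaces in the quotient gives full faithfulness of $\cat C^c/\cat S \to (\cat C/\Ind(\cat S))^c$ without any compact generation hypothesis on $\cat C$; and, granting the lifting statement, passing to idempotent completions yields the equivalence. The problem is the lifting statement itself.

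The genuine gap is the retract reduction, and it sits exactly where the content of Efimov's theorem lies. A colimit-preserving functor preserves compact objects if and only if its right adjoint preserves filtered colimits. In the presentation that \cite[D.7.3.1]{Lurie18} actually provides, $\tilde{\cat C} = \Ind(\cat C^{\aleph_1})$ with section $\hat{y}$ (the left adjoint of the colimit functor $k$) and retraction $r = k$. Here the section does preserve compacts, and one can even check that the induced section on the quotients does (its right adjoint is the restriction of $k$ to the $\Ind(\cat S')$-local objects, which preserves colimits), so your lift $\tilde{x}$ can be produced; but the retraction $k$ does \emph{not} preserve compacts: $\tilde{\cat C}^c \simeq \cat C^{\aleph_1}$, and $k$ carries these to $\aleph_1$-compact objects of $\cat C$, which are compact only when $\cat C$ is compactly generated (the right adjoint of $k$ is the restricted Yoneda embedding, which preserves filtered colimits precisely in that case). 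Consequently, applying the classical Neeman--Thomason theorem \cite{Neeman96} upstairs and pushing down yields only some $a \in \cat C^{\aleph_1}$ with $q(a) \simeq x \oplus \Sigma x$, not a compact $a \in \cat C^c$. Your parenthetical ``arranged to preserve compact objects via dualizability'' is therefore not bookkeeping but the entire theorem: no construction is given of a compactly generated envelope whose retraction (and its descent to the quotients) is compact-preserving, and producing one would essentially presuppose the statement being proved. Bridging the gap from $\aleph_1$-compact to compact is precisely what Efimov's actual argument accomplishes, via his analysis of compact objects and trace-class/compact morphisms in dualizable categories; it does not follow from the compactly generated case by transport along a retraction. (Similarly, ``lift $x$ to a compact $\tilde{x} \in \tilde{\cat D}^c$ via this retraction'' is an unproven lifting claim of the same nature; it happens to be justifiable in the canonical presentation as indicated, but not in an arbitrary one, and you do not argue it.)
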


\begin{Rem}
The rest of the section is devoted to recalling the notion of cohomological stratification.
\end{Rem}

\begin{Not}
Let $\cat T$ be a rigidly-compactly generated tt-category. We set
\[
\Hom^*_{\cat T}(a,b) \coloneqq \bigoplus_{i \in \bbZ} \Hom_{\cat T}(a, \Sigma^i b)
\]
for any objects $a, b \in \cat T$. Recall that $R_{\cat T} \coloneqq \End^*_{\cat T}(\unit) \coloneqq \Hom^*_{\cat T}(\unit, \unit)$ is a graded-commutative $\bbZ$-graded ring which canonically acts on $\cat T$, making each $\Hom^*_{\cat T}(a,b)$ a graded $R_{\cat T}$-module. For the rest of the paper, all commutative algebra constructions about $R_{\cat T}$ will be the graded version. For example, an ideal of $R_{\cat T}$ will always mean a homogeneous ideal and $\Spec(R_{\cat T})$ will denote the homogeneous Zariski spectrum of $R_{\cat T}$. By \cite{Balmer10a} there exists a natural continuous comparison map
\[
\rho_{\cat T} \colon \Spc(\cat T^c) \to \Spec(R_{\cat T}).
\]
\end{Not}

\begin{Def}\label{def:coh-strat}
Let $\cat T$ be a rigidly-compactly generated tt-category. We say that $\cat T$ is \emph{cohomologically stratified} if:
\begin{enumerate}
\item $\cat T$ is \emph{noetherian}, meaning that the ring $R_{\cat T} = \End^*_{\cat T}(\unit)$ is noetherian and the $R_{\cat T}$-module $\Hom^*_{\cat T}(c, d)$ is finitely generated for any compact objects $c, d \in \cat T^c$;
\item $\rho_{\cat T} \colon \Spc(\cat T^c) \to \Spec(R_{\cat T})$ is a homeomorphism;
\item $\cat T$ is stratified.
\end{enumerate}
\end{Def}

\begin{Rem}
As explained in \cite[Section~7]{BarthelHeardSanders23b}, $\cat T$ is cohomologically stratified in the sense above if and only if it is stratified by the canonical action of $R_{\cat T}$ in the sense of \cite{BensonIyengarKrause11b}. In this case, we have $\rho_{\cat T}(\Supp(t)) = \SuppBIK(t)$ for all $t \in \cat T$; see \cite[Theorem~9.3]{Zou23}. Here $\SuppBIK$ denotes the support in the sense of Benson--Iyengar--Krause. In the proof of our main theorem we will use some properties of $\SuppBIK$ established in \cite{BensonIyengarKrause08}.
\end{Rem}

\begin{Exa}\label{exa:dr-coh-strat}
The derived category $\Der(A)$ of a commutative noetherian ring is cohomologically stratified.
\end{Exa}

\begin{Def}\label{def:loc-coh-strat}
Let $\cat T$ be a rigidly-compactly generated tt-category. We say that $\cat T$ is \emph{locally cohomologically stratified} if the local category $\cat T_{\cat P}$ is cohomologically stratified for every $\cat P \in \Spc(\cat T^c)$.
\end{Def}

\begin{Exa}\label{exa:dx-loc-coh-strat}
Let $X$ be a noetherian scheme and $\Derqc(X)$ the derived category of complexes of $\cat O_X$-modules with quasi-coherent cohomology. Although $\Derqc(X)$ is rarely cohomologically stratified if $X$ is nonaffine, it is always locally cohomologically stratified. Indeed, for any $\cat P \in \Spc(\Derperf(X))$ we can choose an affine open neighborhood $U \cong \Spec(A)$ of $\cat P$ so $A$ is a commutative noetherian ring. It then follows from \cite[Remark~5.9, Proposition~1.32, and Example~1.36]{BarthelHeardSanders23b} that we have an equivalence
\[
\Derqc(X)_{\cat P} \simeq \Der(A_{\frakp})
\]
where $\frakp \in \Spec(A) \cong U$ is the prime corresponding to $\cat P$. Therefore, $\Derqc(X)_{\cat P}$ is cohomologically stratified since $\Der(A_{\frakp})$ is by \cref{exa:dr-coh-strat}.
\end{Exa}

\begin{Rem}
The example above shows that a locally cohomologically stratified category need not be cohomologically stratified. Nevertheless, if $\Spc(\cat T^c)$ is noetherian, then being locally cohomologically stratified implies being stratified:
\end{Rem}

\begin{Prop}\label{prop:local-coh-strat}
Let $\cat T$ be a rigidly-compactly generated tt-category. Consider the following statements:
\begin{enumerate}
\item $\cat T$ is cohomologically stratified;
\item $\cat T$ is locally cohomologically stratified and $\Spc(\cat T^c)$ is noetherian;
\item $\cat T$ is stratified and $\Spc(\cat T^c)$ is noetherian.
\end{enumerate}
We have (a) $\implies$ (b) $\implies$ (c).
\end{Prop}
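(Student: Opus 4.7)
The plan is as follows. For (a) $\Rightarrow$ (b), I will unwind the three ingredients of cohomological stratification along the finite localization $\cat T \to \cat T_{\cat P}$ of \cref{exa:local-cat}. For (b) $\Rightarrow$ (c), I will invoke the local-to-global principle, which holds automatically when the spectrum is noetherian (see \cite{BarthelHeardSanders23b}), and then verify minimality of each piece $\cat T_{\{\cat P\}}$.

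For (a) $\Rightarrow$ (b), noetherianity of $\Spc(\cat T^c)$ is immediate from the homeomorphism $\rho_{\cat T} \colon \Spc(\cat T^c) \xrightarrow{\sim} \Spec(R_{\cat T})$ together with noetherianity of $R_{\cat T}$. Fix $\cat P$ with image $\frakp = \rho_{\cat T}(\cat P)$. I would then check the three clauses of \cref{def:coh-strat} for $\cat T_{\cat P}$ in turn: (i) identify $R_{\cat T_{\cat P}}$ with the graded localization $(R_{\cat T})_{\frakp}$ (a standard computation for finite localizations, using that $\unit$ is compact and that $\ker(q)$ is compactly generated by $\SET{x \in \cat T^c}{\supp(x) \subseteq Y_{\cat P}}$), and similarly $\Hom^*_{\cat T_{\cat P}}(q(c), q(d)) \cong (\Hom^*_{\cat T}(c,d))_{\frakp}$; both remain noetherian, respectively finitely generated, after localization; (ii) assemble a commutative square relating $\rho_{\cat T_{\cat P}}$ with the restriction of $\rho_{\cat T}$ to $\gen(\cat P) \cong \gen(\frakp)$, and conclude that it is a homeomorphism; (iii) deduce stratification of $\cat T_{\cat P}\simeq \cat T_{\gen(\cat P)}$ from \cref{lem:quotient} applied with $S_1 = \Spc(\cat T^c)$ and $S_2 = Y_{\cat P}$, so that localizing ideals of $\cat T_{\cat P}$ lift to localizing ideals of $\cat T$ and the support bijection transports.

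For (b) $\Rightarrow$ (c), the local-to-global principle (available because $\Spc(\cat T^c)$ is noetherian) reduces stratification to checking that for each $\cat P$ the localizing ideal $\cat T_{\{\cat P\}}$ has no proper nonzero localizing sub-ideals. I would transport this question to $\cat T_{\cat P}$: the embedding $\varphi \colon \Spc(\cat T_{\cat P}^c) \hook \Spc(\cat T^c)$ identifies $\cat P$ as the unique closed point of the local category, and the finite localization preserves the piece $\cat T_{\{\cat P\}}$ through the identification $\cat T_{\cat P}\simeq \cat T_{\gen(\cat P)}$. Cohomological stratification of $\cat T_{\cat P}$ then supplies the needed minimality directly: any localizing sub-ideal of $(\cat T_{\cat P})_{\{\cat P\}}$ corresponds under the stratification bijection for $\cat T_{\cat P}$ to a subset of $\{\cat P\}$, hence is either zero or all of $(\cat T_{\cat P})_{\{\cat P\}}$.

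The main obstacle I expect is the bookkeeping in step (i) of the first implication --- verifying that the graded endomorphism and hom rings realize the commutative-algebra localization at $\frakp$ under the finite localization $q$. Once that identification is in place, the remaining steps are either formal translation through \cref{exa:finite-local,lem:quotient} or direct appeals to the local-to-global principle and the stratification hypothesis.
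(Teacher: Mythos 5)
Your overall strategy coincides with the paper's: for (a)~$\Rightarrow$~(b) you transfer the three clauses of \cref{def:coh-strat} along the finite localization $\cat T \to \cat T_{\cat P}$ of \cref{exa:local-cat}, and for (b)~$\Rightarrow$~(c) you combine the local-to-global principle (valid since $\Spc(\cat T^c)$ is noetherian) with the minimality at the closed point supplied by stratification of each $\cat T_{\cat P}$; your second implication is argued exactly as in the paper and is fine, and your commutative-square argument for the homeomorphism $\rho_{\cat T_{\cat P}}$ can also be completed (the paper instead deduces surjectivity from \cite[Theorem~7.3]{Balmer10a} and concludes via \cite[Corollary~2.8]{Lau23}).

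Two points in your step (i) need more than what you offer. First, the isomorphism $\Hom^*_{\cat T_{\cat P}}(q(c),q(d)) \cong \Hom^*_{\cat T}(c,d)_{\frakp}$ is \emph{not} a formal property of finite localizations: compactness of $\unit$ and compact generation of $\ker(q)$ do not yield it, since for a general finite localization at a specialization closed subset the graded homs of the quotient are not the algebraic localization of the graded homs at a prime of $R_{\cat T}$. What makes it true here is that, because $\rho_{\cat T}$ is a homeomorphism, the finite localization at $Y_{\cat P}$ coincides with Balmer's central (algebraic) localization at $\frakp$; this identification is the key input, and it is what the paper cites (\cite[Example~6.1]{Zou23} together with \cite[Construction~3.5 and Theorem~3.6]{Balmer10a}). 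Second, clause (a) of \cref{def:coh-strat} for $\cat T_{\cat P}$ concerns \emph{all} compact objects of $\cat T_{\cat P}$, not only the images $q(c)$ of compacts of $\cat T$: compacts of $\cat T_{\cat P}$ are in general only summands of such images, so your ``similarly'' needs the Neeman--Thomason theorem (as in \cref{exa:finite-local} or \cref{thm:neeman--thomason}). Given $c,d \in \cat T_{\cat P}^c$, choose $a,b \in \cat T^c$ with $q(a) \cong c \oplus \Sigma c$ and $q(b) \cong d \oplus \Sigma d$, and use that direct summands of finitely generated modules over the noetherian ring $(R_{\cat T})_{\frakp}$ are finitely generated. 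With these two repairs, your remaining steps, including the transport of stratification to $\cat T_{\cat P}$ (which the paper takes from \cite[Corollary~4.9]{BarthelHeardSanders23b}), go through as planned.
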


\begin{proof}
For (b) $\implies$ (c), since $\Spc(\cat T^c)$ is noetherian, the local-to-global principle holds by \cite[Theorem~3.22]{BarthelHeardSanders23b}. Thus, to prove that $\cat T$ is stratified it suffices to show that $\cat T_{\cat P}$ satisfies the minimality condition at the unique closed point, by \cite[Corollary~5.3]{BarthelHeardSanders23b}. This follows from the fact that $\cat T_{\cat P}$ is stratified since it is cohomologically stratified by our assumption.

For (a) $\implies$ (b), since $\rho_{\cat T}$ is a homeomorphism, the localization $\cat T \to \cat T_{\cat P}$ at a prime $\cat P$ coincides with the algebraic localization at the prime $\frakp \coloneqq \rho_{\cat T}(\cat P)$ by \cite[Example~6.1]{Zou23}. Hence, by \cite[Construction~3.5 and Theorem~3.6]{Balmer10a} we have
\[
\Hom^*_{\cat T_{\cat P}}(x_{\cat P}, y_{\cat P}) \cong \Hom^*_{\cat T}(x, y)_{\frakp}
\]
for any compact objects $x, y \in \cat T^c$. In particular, $R_{\cat T_{\cat P}} \cong (R_{\cat T})_{\frakp}$ is noetherian. To check that $\cat T_{\cat P}$ is noetherian, suppose that $c, d$ are compact objects of $\cat T_{\cat P}$. By the Neeman--Thomason localization theorem there exist compact objects $a, b \in \cat T^c$ such that $a_{\cat P} = c \oplus \Sigma c$ and $b_{\cat P} = d \oplus \Sigma d$. It follows that
\[
\Hom^*_{\cat T_{\cat P}}(c \oplus \Sigma c, d \oplus \Sigma d) \cong \Hom^*_{\cat T}(a, b)_{\frakp}
\]
is finitely generated over $R_{\cat T_{\cat P}}$. Hence, $\Hom^*_{\cat T_{\cat P}}(c, d)$ is also finitely generated. Therefore, $\cat T_{\cat P}$ is noetherian. Since $R_{\cat T_{\cat P}}$ is noetherian, $\rho_{\cat T_{\cat P}}$ is surjective by \cite[Theorem~7.3]{Balmer10a}. Moreover, the following diagram commutes by the naturality of $\rho$ (\cite[Theorem~5.3(c)]{Balmer10a}):
\[
\begin{tikzcd}
\Spc(\cat T_{\cat P}^c) \arrow[r, two heads, "\rho_{\cat T_{\cat P}}"] \arrow[d, hook] & \Spec(R_{\cat T_{\cat P}}) \arrow[d, hook] \\
\Spc(\cat T^c) \arrow[r, "\sim", "\rho_{\cat T}"']                                            & \Spec(R_{\cat T}).
\end{tikzcd}
\]
It follows that $\rho_{\cat T_{\cat P}}$ is a bijection and hence a homeomorphism by \cite[Corollary~2.8]{Lau23}. It remains to show that stratification passes to local categories, but this was proved in \cite[Corollary~4.9]{BarthelHeardSanders23b}.
\end{proof}

\section{Convexity and dualizability}
In this section we recall the notion of convexity and prove our main theorem.

\begin{Hyp}
Throughout this section, $\cat C$ denotes a rigidly-compactly generated tt-$\infty$-category.
\end{Hyp}

\begin{Not}
For any subset $S \subseteq \Spc(\cat C^c)$, we write $\cat C_S = \SET{t \in \cat C}{\Supp(t) \subseteq S}$ for the localizing ideal of objects supported on $S$. For example, if $\cat C$ is local with unique closed point $\cat M$, then $\cat C_{\{\cat M\}}$ is the localizing ideal of objects supported at the closed point.
\end{Not}

\begin{Rec}
Let $X$ be any spectral space. The specialization order on $X$ is defined by $x \le y$ if and only if $y$ is a specialization of $x$, that is, $y \in \overline{\{x\}}$, or equivalently, $x \in \gen(y)$. Note that the specialization closed subsets are exactly the unions of closed subsets.
\end{Rec}

\begin{Def}
Let $X$ be a spectral space. A subset $S$ of $X$ is said to be \emph{convex} if it is convex with respect to the specialization order on $X$, that is
\[
x \le y \le z \text{ and } x, z \in S \implies y \in S.
\]
\end{Def}

\begin{Rem}
Specialization closed subsets are clearly convex. More generally, we have:
\end{Rem}

\begin{Lem}\label{lem:convex}
A subset $S$ of a spectral space $X$ is convex if and only if there exist specialization closed subsets $S_2 \subseteq S_1$ of $X$ such that $S = S_1 \setminus S_2$.
\end{Lem}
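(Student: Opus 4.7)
The plan is to prove both implications directly from the definitions, with the easy direction being the ``if'' part and essentially all the content being the ``only if'' part.

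For the ``if'' direction, I would start by assuming $S = S_1 \setminus S_2$ with $S_1, S_2$ specialization closed and $S_2 \subseteq S_1$, and then verify convexity by hand: given $x \le y \le z$ with $x, z \in S$, the relation $x \le y$ together with $x \in S_1$ and specialization closure of $S_1$ forces $y \in S_1$, while $y \le z$ combined with $z \notin S_2$ and specialization closure of $S_2$ forces $y \notin S_2$ (since otherwise $z$ would be a specialization of $y \in S_2$, hence in $S_2$). Hence $y \in S_1 \setminus S_2 = S$.

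For the ``only if'' direction, the natural candidates are $S_1 \coloneqq \SET{y \in X}{\exists\, x \in S,\, x \le y\,}$, the specialization closure of $S$, and $S_2 \coloneqq S_1 \setminus S$. By construction $S_1$ is specialization closed, $S_2 \subseteq S_1$, and $S = S_1 \setminus S_2$, so the only thing to check is that $S_2$ is specialization closed. This is where convexity of $S$ enters. Given $y \in S_2$ and $y \le z$, choose $x \in S$ with $x \le y$; then $x \le z$, so $z \in S_1$. If $z$ were in $S$, then $x \le y \le z$ with $x, z \in S$ would force $y \in S$ by convexity, contradicting $y \in S_2 = S_1 \setminus S$. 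Hence $z \notin S$, and so $z \in S_2$.

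There is no real obstacle here; the argument is purely order-theoretic and does not use any spectral structure on $X$ beyond the specialization preorder. The only mild subtlety is being careful that $S_2$ is defined as $S_1 \setminus S$ (not $X \setminus S$), so that the chain $x \le y \le z$ lies inside $S_1$ and convexity can be applied with both endpoints in $S$.
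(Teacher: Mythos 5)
Your proof is correct and follows exactly the same route as the paper: the same choice of $S_1$ as the specialization closure of $S$ and $S_2 = S_1 \setminus S$, with convexity used precisely to show $S_2$ is specialization closed. You simply spell out the details that the paper leaves implicit.
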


\begin{proof}
The if part is immediate by definition. For the only if part, we can take $S_1$ to be the specialization closure of $S$, that is, $\bigcup_{x \in S}\overline{\{x\}}$, and $S_2$ to be $S_1 \setminus S$. The convexity of $S$ guarantees that $S_2$ is specialization closed.
\end{proof}

\begin{Prop}\label{prop:dualizable}
Suppose that $\cat C$ is stratified with $\Spc(\cat C^c)$ noetherian. If $S$ is a convex subset of $\Spc(\cat C^c)$ then $\cat C_S$ is compactly generated and hence dualizable.
\end{Prop}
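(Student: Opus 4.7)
The plan is to realize $\cat C_S$ as a Verdier quotient of two compactly generated categories in such a way that Neeman's localization theorem applies.

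By \cref{lem:convex}, write $S = S_1\setminus S_2$ where $S_2 \subseteq S_1$ are specialization closed subsets of $\Spc(\cat C^c)$. By \cref{exa:finite-local}, the localizing ideals $\cat C_{S_1}$ and $\cat C_{S_2}$ are both compactly generated, being generated by the compact objects $\cat C^c_{S_i} = \SET{x \in \cat C^c}{\supp(x)\subseteq S_i}$, respectively. Moreover, since $\cat C_{S_1}$ is a localizing subcategory of $\cat C$, the inclusion $\cat C_{S_1}\hook \cat C$ preserves colimits, so the compact generators of $\cat C_{S_2}$ remain compact when viewed inside $\cat C_{S_1}$.

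Applying Neeman's localization theorem (in its classical form, see \cite[Theorem~2.1]{Neeman96}) to the inclusion $\cat C_{S_2}\hook \cat C_{S_1}$, the Verdier quotient $\cat C_{S_1}/\cat C_{S_2}$ is compactly generated. Next, I would identify this quotient with $\cat C_S$: \cref{lem:quotient} provides an equivalence at the level of triangulated categories, and the $\infty$-categorical version follows from the identification
\[
\cat C_{S_1}/\cat C_{S_2} \simeq \cat C_{S_1} \cap \cat C_{S_2}^{\perp}
\]
of \eqref{eq:right-ortho} combined with the relation $\cat C \cap \cat C_{S_2}^\perp \simeq \cat C_{S_2^c}$ of \eqref{eq:right-ortho-y}, giving $\cat C_S \simeq \cat C_{S_1\setminus S_2}$. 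Thus $\cat C_S$ is compactly generated, and hence dualizable by the example following the definition of dualizability (every compactly generated object of $\Prst$ is dualizable).

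I do not anticipate a serious obstacle here: every ingredient is already in place in the earlier sections. The only mild subtlety is passing from the triangulated Verdier quotient to the $\infty$-categorical cofiber in $\Prst$, but this is handled by the relation between $\Ho(\cat C_{S_1}/\cat C_{S_2})$ and $\cat T_{S_1}/\cat T_{S_2}$ recorded in the remark preceding \cref{exa:finite-local}, together with the fact (\cref{rem:term}) that compact generation is detected on homotopy categories.
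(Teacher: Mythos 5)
Your proof is correct and follows essentially the same route as the paper: decompose $S = S_1 \setminus S_2$ via \cref{lem:convex}, identify $\cat C_S$ with the quotient $\cat C_{S_1}/\cat C_{S_2}$ via \cref{lem:quotient}, and conclude compact generation (hence dualizability). The only difference is cosmetic: where the paper cites a reference for the compact generation of this quotient, you re-derive it directly from Neeman's localization theorem applied to the compact generators of $\cat C_{S_2}$ inside $\cat C_{S_1}$, which is exactly the content of the cited lemma.
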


\begin{proof}
The first part of the proof in \cite[Theorem~3.10]{Efimov24} carries over, \emph{mutatis mutandis}. We spell out the detail for the convenience of the reader. By \cref{lem:convex} there exist some specialization closed subsets $S_1, S_2$ such that $S = S_1 \setminus S_2$. We thus have $\cat T_S \simeq \cat T_{S_1} / \cat T_{S_2}$ by \cref{lem:quotient}. The latter is compactly generated by \cite[Lemma~2.17]{BarthelHeardValenzuela18}, for example.
\end{proof}

\begin{Rem}
A subtle detail to keep in mind is that a localizing ideal $\cat C_S$ can be compactly generated (as a stable $\infty$-category) without being generated by compact objects in the ambient category $\cat C$. In other words, the inclusion $\cat C_S \hook \cat C$ need not preserve compact objects. \cref{prop:dualizable} demonstrates this. For this reason, we will not use the term ``compactly generated localizing ideal'' as used in \cite{BarthelHeardSanders23b} because their usage more precisely means ``generated by compact objects in $\cat C$''.
\end{Rem}

\begin{Thm}\label{thm:main}
If $\cat C$ is locally cohomologically stratified with noetherian Balmer spectrum $\Spc(\cat C^c)$ then there is an inclusion-preserving bijection
\[
\{\text{dualizable localizing ideals of }\cat C\} \xra{\sim} \{\text{convex subsets of }\Spc(\cat C^c)\}
\]
sending $\cat L$ to $\Supp(\cat L)$ with inverse sending $S$ to $\cat C_S$. Moreover, in this case, the dualizable localizing ideals are themselves compactly generated.
\end{Thm}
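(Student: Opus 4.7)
Since $\cat C$ is locally cohomologically stratified with noetherian spectrum, \cref{prop:local-coh-strat} implies that $\cat C$ is stratified. Hence $\cat L\mapsto\Supp(\cat L)$ is already a bijection between the localizing ideals of $\cat C$ and all subsets of $\Spc(\cat C^c)$, with inverse $S\mapsto\cat C_S$. The theorem therefore reduces to showing that a localizing ideal is dualizable if and only if its support is convex; the ``moreover'' clause then follows immediately from \cref{prop:dualizable}. The implication \emph{convex $\Rightarrow$ dualizable} is exactly that proposition.

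For the converse, suppose $\cat L$ is dualizable and suppose toward a contradiction that $S\coloneqq\Supp(\cat L)$ is not convex: pick $\cat P\le\cat Q\le\cat R$ in $\Spc(\cat C^c)$ with $\cat P,\cat R\in S$ and $\cat Q\notin S$. The first step is to reduce to the case where $\cat C$ is local cohomologically stratified. Consider the finite localization $q\colon\cat C\to\cat C_{\cat R}$ of \cref{exa:local-cat}. Since $\cat C$, $\cat L$ and $\cat C_{\cat R}$ are all dualizable (being rigid, dualizable by assumption, and compactly generated respectively), \cref{lem:dualizability} implies that $\cat L\otimes_{\cat C}\cat C_{\cat R}$ is dualizable; since $\cat C$ is locally rigid, \cref{lem:fully-faithful} implies that the natural comparison map $\cat L\otimes_{\cat C}\cat C_{\cat R}\hook\cat C_{\cat R}$ is fully faithful. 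Its essential image is a $\cat C_{\cat R}$-submodule---that is, a localizing ideal---containing $q(\cat L)$, and by stratification of $\cat C_{\cat R}$ it must coincide with $(\cat C_{\cat R})_{S\cap\gen(\cat R)}$. Replacing $\cat C$ by $\cat C_{\cat R}$ and $\cat L$ by this image, I may assume $\cat C$ is local cohomologically stratified with closed point $\cat R$, still with $\cat P,\cat R\in\Supp(\cat L)$ and $\cat Q\notin\Supp(\cat L)$.

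Under the homeomorphism $\rho_{\cat C}\colon\Spc(\cat C^c)\xra{\sim}\Spec(R_{\cat C})$, the primes $\cat P,\cat Q,\cat R$ correspond to $\frakp\subseteq\frakq\subseteq\frakm$ with $\frakm$ maximal. Using stratification together with minimality at each point, membership $\cat P'\in\Supp(\cat L)$ is equivalent to the Benson--Iyengar--Krause residue field $\kappa(\frakp')$ lying in $\cat L$; hence $\kappa(\frakp),\kappa(\frakm)\in\cat L$ while $\kappa(\frakq)\notin\cat L$. To derive the contradiction, I appeal to \cref{thm:neeman--thomason}: dualizability of $\cat L$ lets me describe the compact objects of $\cat L$ (up to idempotent completion) in terms of the compact objects of the ambient compactly generated setup, and combined with the BIK support calculus of \cite{BensonIyengarKrause08}---explicitly, the formulas governing $\Supp(\Gamma_V-)$ and $\Supp(L_V-)$ for specialization closed $V\subseteq\Spec(R_{\cat C})$, which are available because $\cat C$ is cohomologically stratified---this should force the existence of a compact object of $\cat L$ whose support contains $\cat Q$. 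Such an object supplies $\kappa(\frakq)\in\cat L$, contradicting the previous sentence.

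The main obstacle is this final contradiction step: extracting an object with support at $\cat Q$ from the combination of \cref{thm:neeman--thomason} and the BIK calculus. The reductions and the translation to residue fields are essentially formal given the machinery developed in Sections~2 and~3; the genuinely new input is carrying over Efimov's strategy for $\Der(R)$ to the locally cohomologically stratified setting, and, as the introduction emphasizes, this is exactly where the generalized Neeman--Thomason theorem is indispensable (without it one can only recover compactly generated ideals and not all dualizable ones).
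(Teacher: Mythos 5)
Your first half tracks the paper closely: the reduction via stratification and \cref{prop:dualizable}, and the passage to the local category by tensoring the inclusion $\cat L \hook \cat C$ with the finite localization and identifying the image via \cref{lem:fully-faithful}, \cref{lem:dualizability} and the support of the image, is exactly how the actual proof begins. But the part you flag as ``the main obstacle'' is not a routine verification to be deferred --- it is the mathematical core of the theorem, and your sketch of it is missing the two ideas that make it work. First, a combinatorial preparation: before localizing, the paper chooses the endpoint (your $\cat P$, the paper's $\cat Q$) to be \emph{maximal} in the specialization order among points of $S$ whose closure meets $\gen$ of the other endpoint outside $S$ (possible since the spectrum is noetherian), and then performs a \emph{second} restriction, tensoring with the ideal supported on $\overline{\{\cat Q'\}}$, to land on an ideal of the form $\cat D_{S'\cap\overline{\{\cat Q'\}}}$ in which $S'\cap\overline{\{\cat Q'\}}\setminus\{\cat Q'\}$ is specialization closed. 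Without this, you cannot legitimately invoke \cref{thm:neeman--thomason}: Efimov's theorem applies to quotients by $\Ind$ of ambient compacts, so one needs the subcategory being killed to be generated by compact objects of $\cat C$ (\cref{exa:finite-local}), which is exactly what the specialization-closedness of that boundary set, arranged by the maximality choice, guarantees; your reduction stops one step too early for this.

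Second, and more seriously, the actual contradiction is not obtained from ``formulas for $\Supp(\Gamma_V-)$ and $\Supp(L_V-)$.'' The paper's mechanism (\cref{lem:notdualizable}) is: Neeman--Thomason produces a compact object $t$ of the putatively dualizable ideal lifting a nonzero compact of the quotient $\simeq \cat C_{\{\cat Q'\}}$; since the ideal contains everything supported at the closed point, $\Hom(t,-)$ commutes with coproducts of such objects, and the key technical results \cref{lem:matlis-lift} and \cref{lem:finite-generation} (a Brown-representability lift of the injective hull of the residue field at $\frakm$, plus a ``does not factor through a finite coproduct'' argument) show that $H^*_{\unit}(t)$ is a finitely generated $R$-module; then the Benson--Iyengar--Krause results on supports via annihilators of finitely generated cohomology (\cref{cor:special-closed}) force $\Supp(t)$ to be \emph{specialization closed}, while stratification forces $\cat Q'\in\Supp(t)$, so $\overline{\{\cat Q'\}}\subseteq\Supp(t)\subseteq S$, contradicting the choice of the missing middle point. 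None of this finiteness argument appears in your proposal, and it is precisely the part of Efimov's affine proof that does not transfer verbatim (he proves the stronger statement that $t$ is compact in $\cat C$); so as written the proposal has a genuine gap at the decisive step, even though your overall architecture and choice of tools point in the right direction.
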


\begin{Rem}
To prove \cref{thm:main}, we need some preparation.
\end{Rem}

\begin{Not}
Let $\cat T$ be a rigidly-compactly generated tt-category. Recall that we have the graded ring $R \coloneqq R_{\cat T} = \End_{\cat T}^*(\unit)$ and an $R$-module $\Hom^*_{\cat T}(a, b)$ for any objects $a, b \in \cat T$. For any $n \in \bbZ$ and any $R$-module $M$, we write $M[n]$ for the degree shift by $n$. For example, we have
\[
\Hom^*_{\cat T}(a,\Sigma^n b) \cong \Hom^*_{\cat T}(a,b)[n].
\]
We will also write $H^*_{a}(b)$ for $\Hom_{\cat T}^*(a, b)$ and think of it as the cohomology of $b$ with respect to $a$.
\end{Not}

\begin{Lem}\label{lem:matlis-lift}
Let $\cat T$ be a rigidly-compactly generated tt-category. Suppose that $I$ is an injective $R$-module. There exists an object $I_{\unit} \in \cat T$ such that for any set $\{n_i\}_i$ of integers there is a natural isomorphism
\[
\alpha \colon \Hom_{\cat T}(-, \coprod_i \Sigma^{n_i} I_{\unit}) \to \Hom_{R}(H^*_{\unit}(-), \bigoplus_i I[n_i]).
\]
\end{Lem}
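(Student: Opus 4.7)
The plan is to use Brown representability to construct $I_\unit$ as the representing object of the contravariant functor $t \mapsto \Hom_R(H^*_\unit(t), I)$, then to construct $\alpha$ explicitly via an evaluation map, and finally to verify $\alpha$ is an isomorphism by reducing to compact objects through a standard localizing-subcategory argument.

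First, I would observe that the functor $F \colon \cat T^{op} \to \mathrm{Ab}$ defined by $F(t) = \Hom_R(H^*_\unit(t), I)$ is cohomological: $H^*_\unit = \Hom^*_{\cat T}(\unit, -)$ sends distinguished triangles in $\cat T$ to long exact sequences of graded $R$-modules, and the injectivity of $I$ renders $\Hom_R(-, I)$ exact. Moreover, $F$ converts coproducts in $\cat T$ into products of abelian groups: compactness of $\unit$ turns coproducts into direct sums in the first step, and $\Hom_R(-, I)$ converts direct sums into products. Since $\cat T$ is rigidly-compactly generated, Brown representability yields an object $I_\unit \in \cat T$ with a natural isomorphism $\Hom_{\cat T}(-, I_\unit) \cong F(-)$. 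A short manipulation using $\Hom_R(M[-n], N) \cong \Hom_R(M, N[n])$ then shows that $\Sigma^{n}I_\unit$ represents $\Hom_R(H^*_\unit(-), I[n])$.

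Second, I would construct the natural transformation $\alpha$ explicitly. Evaluating the representation isomorphism at the identity of $I_\unit$ produces an evaluation map $\eta \colon H^*_\unit(I_\unit) \to I$. For a morphism $f \colon t \to \coprod_i \Sigma^{n_i} I_\unit$, I define $\alpha(f)$ as the composite
\[
H^*_\unit(t) \xra{H^*_\unit(f)} H^*_\unit\Bigl(\coprod_i \Sigma^{n_i} I_\unit\Bigr) \cong \bigoplus_i H^*_\unit(I_\unit)[n_i] \xra{\bigoplus \eta[n_i]} \bigoplus_i I[n_i],
\]
where the middle identification uses the compactness of $\unit$; naturality of $\alpha$ is immediate. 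To verify $\alpha_t$ is an isomorphism for every $t \in \cat T$, let $\cat S \subseteq \cat T$ be the full subcategory of those $t$ for which $\alpha_t$ is an isomorphism. Both sides of $\alpha$ convert coproducts in $t$ into products, so $\cat S$ is closed under coproducts, and it is obviously closed under suspensions. Provided $\bigoplus_i I[n_i]$ is injective as a graded $R$-module, the right-hand side is cohomological in $t$ and $\cat S$ is closed under cofibers, hence localizing. It therefore suffices to check that $\alpha_c$ is an isomorphism for every compact $c \in \cat T^c$, which reduces to the chain of natural identifications
\[
\Hom_{\cat T}\Bigl(c, \coprod_i \Sigma^{n_i} I_\unit\Bigr) \cong \bigoplus_i \Hom_{\cat T}(c, \Sigma^{n_i} I_\unit) \cong \bigoplus_i \Hom_R(H^*_\unit(c), I[n_i]) \cong \Hom_R\Bigl(H^*_\unit(c), \bigoplus_i I[n_i]\Bigr),
\]
with the last isomorphism relying on $H^*_\unit(c)$ being finitely presented as a graded $R$-module. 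A routine diagram chase identifies the resulting composite with $\alpha_c$.

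The main obstacle, and the reason the lemma will only be applied in a noetherian context, is that the argument tacitly uses that $R$ is graded-noetherian in two places: the injectivity of $\bigoplus_i I[n_i]$, needed for the cohomological character of the right-hand functor, and the compactness of $H^*_\unit(c)$ in graded $R$-modules, needed to commute $\Hom_R(H^*_\unit(c), -)$ with direct sums. The remaining routine-but-careful piece is the graded degree bookkeeping across the shift identifications.
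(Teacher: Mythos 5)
Your proposal is correct and follows essentially the same route as the paper's proof: Brown representability applied to $\Hom_{R}(H^*_{\unit}(-),I)$, the evaluation map $H^*_{\unit}(I_{\unit})\to I$ used to define $\alpha$, reduction to compact objects, and the canonical identifications there. The two finiteness points you flag (injectivity of $\bigoplus_i I[n_i]$ and finite generation of $H^*_{\unit}(c)$ for compact $c$) are precisely what the paper's argument also relies on implicitly in its reduction to compacts and in its isomorphism $\lambda_2$, and they are available in all of the paper's applications since the lemma is only invoked for cohomologically stratified categories.
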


\begin{proof}
Since $I$ is injective, Brown representability yields an object $I_{\unit} \in \cat T$ and a natural isomorphism
\[
\Phi \colon \Hom_{\cat T}(-,I_{\unit}) \xra{\sim} \Hom_{R}(H^*_{\unit}(-),I)
\]
which, by a degree shifting argument, extends to a natural isomorphism of $R$-modules
\begin{equation}\label{eq:phi-star}
\Phi^* \colon \Hom^*_{\cat T}(-,I_{\unit}) \xra{\sim} \Hom^*_R(H^*_{\unit}(-),I).
\end{equation}
Observe that
\begin{equation}\label{eq:phi}
\Phi^n_{\Sigma^n I_{\unit}}(\id_{\Sigma^n I_{\unit}}) = \Phi_{I_{\unit}}(\id_{I_{\unit}})[n]
\end{equation}
for any $n \in \bbZ$.

Setting $u \coloneqq \Phi_{I_{\unit}}(\id_{I_{\unit}}) \colon H^*_{\unit}(I_{\unit}) \to I$, we note that by the Yoneda lemma the map
\begin{equation}\label{eq:v}
v \colon H^*_{\unit}(\coprod_{i}\Sigma^{n_i}I_{\unit}) \cong \bigoplus_{i} H^*_{\unit}(\Sigma^{n_i}I_{\unit}) \xra{\bigoplus_{i}u[n_i]} \bigoplus_{i}I[n_i]
\end{equation}
gives rise to a natural transformation
\[
\alpha \colon \Hom_{\cat T}(-,\coprod_{i} \Sigma^{n_i} I_{\unit}) \to \Hom_{R}(H^*_{\unit}(-), \bigoplus_{i}I[n_i]).
\]
To prove that $\alpha$ is an isomorphism. It suffices to show that $\alpha_c$ is an isomorphism for any compact object $c \in \cat T^c$. Indeed, we will show that for any $c \in \cat T^c$, the following diagram commutes:
\[
\begin{tikzcd}
\bigoplus_{i} \Hom_{\cat T}(c,\Sigma^{n_i}I_{\unit}) \arrow[r, "\bigoplus_{i} \Phi^{n_i}_c", "\sim"'] \arrow[d, "\lambda_1"', "\rotatebox{90}{$\sim$}"] & \bigoplus_{i} \Hom_{R}(H^*_{\unit}(c), I[n_i]) \arrow[d, "\lambda_2", "\rotatebox{90}{$\sim$}"'] \\
\Hom_{\cat T}(c,\coprod_{i} \Sigma^{n_i} I_{\unit}) \arrow[r, "\alpha_c"]                         & \Hom_{R}(H^*_{\unit}(c), \bigoplus_{i}I[n_i])                       
\end{tikzcd}
\]
where $\lambda_1, \lambda_2$ are the canonical isomorphisms. It suffices to check that for any $k \ge 1$ the following diagram commutes:
\[
\begin{tikzcd}
\Hom_{\cat T}(c,\Sigma^{n_k}I_{\unit}) \arrow[r, "\Phi^{n_k}_c", "\sim"'] \arrow[d, hook, "\iota_1"'] & \Hom_{R}(H^*_{\unit}(c), I[n_k]) \arrow[d, hook', "\iota_2"] \\
\Hom_{\cat T}(c,\coprod_{i}\Sigma^{n_i}I_{\unit}) \arrow[r, "\alpha_c"]                         & \Hom_{R}(H^*_{\unit}(c), \bigoplus_{i}I[n_i])                       
\end{tikzcd}
\]
where $\iota_1$ and $\iota_2$ are the natural inclusions. Indeed, consider any map $h \colon c \to \Sigma^{n_k} I_{\unit}$. By definition, $\alpha_c= v \circ H^*_{\unit}(-)$. Hence, $\alpha_c \iota_1$ sends $h$ to the map
\begin{equation}\label{eq1}
H^*_{\unit}(c) \xra{H^*_{\unit}(h)} H^*_{\unit}(\Sigma^{n_k} I_{\unit}) \hook
H^*_{\unit}(\coprod_{i} \Sigma^{n_i} I_{\unit}) \xra{v} \bigoplus_{i}I[n_i].
\end{equation}
On the other hand, we have
\[
\Phi^{n_k}_c(h) = \Hom_R(h \circ \Phi^{n_k}_{\Sigma^{n_k} I_{\unit}}(\id_{\Sigma^{n_k} I_{\unit}}), I[n_k]) = \Hom_R(h \circ u[n_k], I[n_k])
\]
where the first equality holds by the naturality of $\Phi^{n_k}$ and the second is due to \eqref{eq:phi}. Hence, $\iota_2 \Phi^{n_k}_c$ sends $h$ to the map
\begin{equation}\label{eq2}
H^*_{\unit}(c) \xra{H^*_{\unit}(h)} H^*_{\unit}(\Sigma^{n_k} I_{\unit}) \xra{u[n_k]} I[n_k] \hook \bigoplus_{i} I[n_i].
\end{equation}
A routine diagram chase shows that \eqref{eq1} coincides with \eqref{eq2}, which finishes the proof.
\end{proof}

\begin{Lem}\label{lem:finite-generation}
Let $\cat T$ be a rigidly-compactly generated tt-category. Suppose that $\cat T$ is cohomologically stratified and that $\cat T$ is local with unique closed point $\cat M$. If $t$ is an object of $\cat T$ such that the functor $\Hom_{\cat T}(t,-)$ commutes with coproducts in $\cat T_{\{\cat M\}}$, then the $R$-module $H^*_{\unit}(t)$ is finitely generated.
\end{Lem}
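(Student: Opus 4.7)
The plan is to apply \cref{lem:matlis-lift} to the graded injective hull $E \coloneqq E_R(k)$ of the residue field $k = R/\frakm$ and exploit the hypothesis on $t$. As a first step, I would verify that the resulting object $E_\unit$ lies in $\cat T_{\{\cat M\}}$: setting $- = \unit$ in the natural isomorphism of \cref{lem:matlis-lift} gives $H^*_\unit(E_\unit) \cong E$, whose $R$-support is $\{\frakm\}$; under cohomological stratification this forces $\Supp(E_\unit) \subseteq \{\cat M\}$ via the homeomorphism $\rho_\cat{T}$. Consequently any coproduct $\coprod_i \Sigma^{n_i} E_\unit$ remains in $\cat T_{\{\cat M\}}$, so the hypothesis on $t$ applies.

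Combining that hypothesis with \cref{lem:matlis-lift} then yields, for every family of integers $\{n_i\}$, the isomorphism
\[
\bigoplus_i \Hom_R(M, E[n_i]) \;\xrightarrow{\sim}\; \Hom_R\bigl(M, \bigoplus_i E[n_i]\bigr),
\]
where $M \coloneqq H^*_\unit(t)$. In other words, $\Hom_R(M,-)$ commutes with arbitrary direct sums of degree-shifts of $E$.

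From this I would conclude that $M$ is finitely generated over $R$. First, $M/\frakm M$ must be finite-dimensional over $k$: an infinite homogeneous $k$-basis $\{\bar m_j\}$ of $M/\frakm M$ in degrees $d_j$ would give rise, via the projections $f_j \colon M \twoheadrightarrow M/\frakm M \to k \hookrightarrow E[-d_j]$, to a map $(f_j)_j \colon M \to \bigoplus_j E[-d_j]$ (well-defined because each $m \in M$ projects to a finite sum in $M/\frakm M$) which does not lie in $\bigoplus_j \Hom_R(M, E[-d_j])$, since every $f_j$ is nonzero; this contradicts the displayed isomorphism. A graded Nakayama argument should then lift a finite $k$-basis of $M/\frakm M$ to a finite generating set of $M$. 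The main obstacle, I expect, is this last step: the implication ``$M/\frakm M$ finite-dimensional $\Rightarrow$ $M$ finitely generated'' can fail in the ungraded setting (for example, for $M = E$ itself over a complete local ring), so one must carefully use the graded structure together with the full force of the displayed isomorphism rather than merely its consequence for $M/\frakm M$.
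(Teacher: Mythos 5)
Your overall strategy is the same as the paper's (lift the injective hull $E$ of $R/\frakm$ to an object $E_{\unit}$ via Brown representability as in \cref{lem:matlis-lift}, check $E_{\unit}\in\cat T_{\{\cat M\}}$, and convert the coproduct hypothesis on $t$ into the statement that $\Hom_R(M,-)$, with $M=H^*_{\unit}(t)$, commutes with direct sums of shifts of $E$), but there are two genuine gaps. The first is your verification that $E_{\unit}\in\cat T_{\{\cat M\}}$: you only use $H^*_{\unit}(E_{\unit})\cong E$. In a rigidly-compactly generated tt-category that is not generated by its unit, the support of an object is not detected by $\Hom^*_{\cat T}(\unit,-)$ alone (in $\StMod(kG)$, for instance, there are nonzero objects with vanishing Tate cohomology), so ``$\Supp_R H^*_{\unit}(E_{\unit})=\{\frakm\}$ forces $\Supp(E_{\unit})\subseteq\{\cat M\}$'' is unjustified. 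The correct check, which your own \eqref{eq:phi-star} makes available, is that $H^*_{c}(E_{\unit})\cong\Hom^*_R(H^*_{\unit}(c),E)$ is $\frakm$-torsion for \emph{every} compact $c$ (using finite generation of $H^*_{\unit}(c)$ over the noetherian ring $R$), and then one invokes \cite[Lemma~2.4(2) and Corollary~5.3]{BensonIyengarKrause08} together with $\rho(\Supp(-))=\SuppBIK(-)$. This gap is real but easily repaired.

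The second gap is the decisive one and you flag it yourself without closing it: from ``$\Hom_R(M,-)$ commutes with arbitrary direct sums of shifts of $E$'' you must deduce that $M$ is finitely generated, and your route through finite-dimensionality of $M/\frakm M$ plus ``graded Nakayama'' does not get there. Indeed $M/\frakm M$ can be finite-dimensional (even zero, e.g.\ when $\frakm M=M$) with $M$ not finitely generated, and Nakayama, graded or not, fails for such unbounded modules; so your argument as written only proves the strictly weaker statement that $M/\frakm M$ is finite-dimensional. The step that actually works — and is the heart of the paper's proof — bypasses $M/\frakm M$ entirely: if $M$ is not finitely generated, choose a strictly increasing chain $0=N_0\subsetneq N_1\subsetneq\cdots$ of submodules; since the shifts of $E$ cogenerate, pick nonzero $f_i\colon N_i\to E[n_i]$ with $f_i|_{N_{i-1}}=0$; extend each to $g_i$ on $N=\bigcup_i N_i$ and observe that $(g_i)_i$ lands in $\bigoplus_i E[n_i]$ because of the vanishing condition; finally extend to all of $M$ using that this direct sum is injective ($R$ noetherian). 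The resulting map does not factor through any finite sub-sum, which contradicts your displayed isomorphism (equivalently, after transporting along $\alpha$ from \cref{lem:matlis-lift}, it contradicts the hypothesis that $\Hom_{\cat T}(t,-)$ commutes with coproducts in $\cat T_{\{\cat M\}}$). Without this chain argument, or some equivalent use of the full commutation hypothesis, the proposal is incomplete at exactly the point the lemma requires.
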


\begin{proof}
By cohomological stratification the comparison map $\rho \colon \Spc(\cat T^c) \to \Spec(R)$ is a homeomorphism. It follows that the noetherian ring $R$ is local with unique maximal ideal $\frakm \coloneqq \rho(\cat M)$. We denote by $I$ the injective hull of $R / \frakm$. Therefore, the modules $\{I[n] \mid n \in \bbZ\}$ cogenerate the category of $R$-modules (\cf\cite[Theorem~19.8]{Lam99}). If $H^*_{\unit}(t)$ is not finitely generated then we can choose an infinite sequence of submodules
\[
0 = N_0 \subsetneq N_1 \subsetneq N_2 \subsetneq \cdots \subsetneq H^*_{\unit}(t).
\]
By cogeneration, for every $i \ge 1$ there exists a nonzero map $f_i \colon N_i \to I[n_i]$ for some~$n_i$ with $f_i|_{N_{i-1}}=0$. Write $N \coloneqq \bigcup_{i \ge 1}N_i$. By injectivity, we extend each $f_i$ to a map $g_i \colon N \to I[n_i]$ to obtain a map $N \to \prod_i I[n_i]$. The image of this map is contained in $\bigoplus_i I[n_i]$. Indeed, for any nonzero element $m \in N$, there exists some $k$ such that $m \notin N_{k-1}$ and $m \in N_k$, so $g_i$ annihilates $m$ whenever $i > k$. Therefore, we obtain a map $g \colon N \to \bigoplus_i I[n_i]$ which does not factor through any finite direct sum since every $g_i$ is nonzero. Since $R$ is noetherian, the target of $g$ is also injective. Hence, $g$ extends to a map $f \colon H^*_{\unit}(t) \to \bigoplus_i I[n_i]$ which does not factor through any finite direct sum since $g$ does not.

By \cref{lem:matlis-lift} we obtain an isomorphism
\[
\alpha_t \colon \Hom_{\cat T}(t,\coprod_i \Sigma^{n_i} I_{\unit}) \xra{\sim} \Hom_{R}(H^*_{\unit}(t), \bigoplus_i I[n_i]).
\]
Let $\tilde{f} \colon t \to \coprod_i \Sigma^{n_i} I_{\unit}$ be the map corresponding to the map $f \colon H^*_{\unit}(t) \to \bigoplus_{i} I[n_i]$. Hence, $f = \alpha_t(\tilde{f}) = v \circ H^*_{\unit}(\tilde{f})$ where $v \colon H^*_{\unit}(\coprod_{i}\Sigma^{n_i}I_{\unit}) \to \bigoplus_{i}I[n_i]$ comes from \eqref{eq:v}. We claim that $\tilde{f}$ does not factor through any finite coproduct. Otherwise, there would exist a map $\tilde{f}' \colon t \to \coprod_{j} \Sigma^{n_j} I_{\unit}$ where $j$ ranges over a finite set of positive integers such that the following diagram commutes:
\[
\begin{tikzcd}
t \arrow[r, "\tilde{f}'"'] \arrow[rr, "\tilde{f}", bend left=15] & \coprod_j \Sigma^{n_j} I_{\unit} \arrow[r, hook] & \coprod_i \Sigma^{n_i} I_{\unit}.
\end{tikzcd}
\]
Applying \cref{lem:matlis-lift} to the integers $\{n_j\}_j$, we obtain an isomorphism
\[
\alpha'_t \colon \Hom_{\cat T}(t,\coprod_j \Sigma^{n_j} I_{\unit}) \xra{\sim} \Hom_{R}(H^*_{\unit}(t), \bigoplus_j I[n_j])
\]
such that $\alpha'_t= v' \circ H^*_{\unit}(-)$ where $v' \colon H^*_{\unit}(\coprod_j \Sigma^{n_j} I_{\unit}) \to \bigoplus_j I[n_j]$ is the map \eqref{eq:v} with $i$ replaced by $j$. Setting $f' = \alpha'_t(\tilde{f}') = v' \circ H^*_{\unit}(\tilde{f}')$ we observe that the diagram below commutes:
\[
\begin{tikzcd}
H^*_{\unit}(t) \arrow[r, "H^*_{\unit}(\tilde{f}')"] \arrow[rr, "H^*_{\unit}(\tilde{f})", bend left=35] \arrow[rrr, "f", bend left=49] \arrow[rrd, "f'"'] & H^*_{\unit}(\coprod_j \Sigma^{n_j} I_{\unit}) \arrow[r, hook] \arrow[rd, "v'"] & H^*_{\unit}(\coprod_i \Sigma^{n_i} I_{\unit}). \arrow[r, "v"] & {{\bigoplus_i I[n_i]}.} \\
                                                                                                                                                         &                                                                                & {\bigoplus_j I[n_j]} \arrow[ru, hook]                         &                        
\end{tikzcd}
\]
However, this leads to a contradiction since we showed that $f$ does not factor through any finite direct sum, so the claim follows. That is, $\tilde{f} \colon t \to \coprod_i \Sigma^{n_i} I_{\unit}$ does not factor through any finite coproduct. This will contradict the hypothesis that $\Hom_{\cat T}(t,-)$ commutes with coproducts in $\cat T_{\{\cat M\}}$ if we can show $I_{\unit} \in \cat T_{\{\cat M\}}$. Indeed, by \eqref{eq:phi-star} we have $H^*_{c}(I_{\unit}) \cong \Hom^*_R(H^*_{\unit}(c),I)$ for any $c \in \cat T^c$. The latter is $\frakm$-torsion since $I$ is $\frakm$-torsion and $H^*_{\unit}(c)$ is finitely generated by the noetherian assumption (\cref{def:coh-strat}). It then follows from \cite[Lemma~2.4(2) and Corollary~5.3]{BensonIyengarKrause08} that $\SuppBIK(I_{\unit}) \subseteq \{\frakm\}$, which is equivalent to $\Supp(I_{\unit}) \subseteq \{\cat M\}$ by \cite[Theorem~9.3]{Zou23}. Therefore $I_{\unit} \in \cat T_{\{\cat M\}}$, which completes the proof.
\end{proof}

\begin{Rem}
In the case that $\cat T$ is \emph{unigenic}, meaning that it is generated by the unit $\unit$, the lemma above has a much shorter proof by utilizing \cite[Lemma~2.1]{Hovey07}. We leave the details to the interested reader.
\end{Rem}

\begin{Cor}\label{cor:special-closed}
In the situation of \cref{lem:finite-generation}, the support $\Supp(t)$ is a specialization closed subset of $\Spc(\cat T^c)$.
\end{Cor}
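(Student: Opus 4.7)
The plan is to reduce the claim to finite generation of certain cohomology modules and then invoke a Benson--Iyengar--Krause support formula. By \cref{lem:finite-generation} we already know that $H^*_\unit(t)$ is finitely generated over $R$; I would first upgrade this to the statement that $\Hom^*_\cat T(c,t)$ is finitely generated over $R$ for \emph{every} compact $c \in \cat T^c$.

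For the upgrade, fix $c \in \cat T^c$ and consider $c^\vee \otimes t$. The duality adjunction yields $\Hom_\cat T(c^\vee \otimes t, -) \cong \Hom_\cat T(t, c \otimes -)$, and since $c \otimes -$ preserves coproducts and carries the localizing ideal $\cat T_{\{\cat M\}}$ into itself, the functor $\Hom_\cat T(c^\vee \otimes t, -)$ preserves coproducts in $\cat T_{\{\cat M\}}$. Applying \cref{lem:finite-generation} to $c^\vee \otimes t$ then gives that $H^*_\unit(c^\vee \otimes t) \cong \Hom^*_\cat T(c,t)$ is finitely generated over $R$.

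Next I would invoke the BIK-type equality $\SuppBIK(t) = \bigcup_{c \in \cat T^c} \supp_R \Hom^*_\cat T(c,t)$ available under cohomological stratification; it follows from the fact that $\Gamma_\frakp$ commutes with $\Hom^*_\cat T(c,-)$ for compact $c$, combined with the observation that compacts jointly detect the zero object in $\cat T$. By the previous step each $\Hom^*_\cat T(c,t)$ is finitely generated over the noetherian ring $R$, so each $\supp_R \Hom^*_\cat T(c,t) = V(\Ann_R \Hom^*_\cat T(c,t))$ is closed in $\Spec(R)$; hence $\SuppBIK(t)$ is a union of closed subsets and therefore specialization closed. Since cohomological stratification supplies a homeomorphism $\rho_\cat T \colon \Spc(\cat T^c) \xra{\sim} \Spec(R)$ with $\rho_\cat T(\Supp(t)) = \SuppBIK(t)$ by \cite[Theorem~9.3]{Zou23}, pulling back gives that $\Supp(t)$ is specialization closed in $\Spc(\cat T^c)$. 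The most delicate point is confirming the BIK support formula in the exact form above; if only the inclusion $\SuppBIK(t) \subseteq \bigcup_c \supp_R \Hom^*_\cat T(c,t)$ is explicitly available from \cite{BensonIyengarKrause08}, the reverse inclusion follows from the compatibility $\Gamma_\frakp \Hom^*_\cat T(c,-) \cong \Hom^*_\cat T(c,\Gamma_\frakp(-))$ for compact $c$, which is enough to conclude.
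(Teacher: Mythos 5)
Your first step is exactly the paper's: the adjunction $\Hom_{\cat T}(c^{\vee}\otimes t,-)\cong\Hom_{\cat T}(t,c\otimes-)$, together with the fact that $\cat T_{\{\cat M\}}$ is a tensor ideal, shows that $c^{\vee}\otimes t$ again satisfies the hypothesis of \cref{lem:finite-generation}, so $\Hom^*_{\cat T}(c,t)\cong H^*_{\unit}(c^{\vee}\otimes t)$ is finitely generated for every compact $c$. That part is correct and matches the paper verbatim in substance.

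The gap is in how you justify the support identity $\SuppBIK(t)=\bigcup_{c\in\cat T^c}\supp_R\Hom^*_{\cat T}(c,t)$. You present it as a formal consequence of cohomological stratification, derived from a commutation $\Gamma_{\frakp}\Hom^*_{\cat T}(c,-)\cong\Hom^*_{\cat T}(c,\Gamma_{\frakp}(-))$ plus detection of zero by compacts. Neither claim holds as stated. The identity fails for general objects of a cohomologically stratified category: in $\Der(\bbZ)$ the $p$-adic integers $t=\bbZ_p$ have $\SuppBIK(t)=\{(0),(p)\}$, while $\supp_{\bbZ}\Hom^*_{\Der(\bbZ)}(\bbZ,\bbZ_p)=\Spec(\bbZ)$; so no purely formal argument can prove the equality, and the finite generation you established in the first step is not an optional cosmetic (to rewrite $\supp_R$ as $\cal V(\Ann_R)$) but the essential input for the hard inclusion. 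The proposed commutation is also false: the categorical $\Gamma_{\frakp}$ is a derived local cohomology functor, not the module-level torsion functor (already $\Hom^*_{\Der(R)}(R,\Gamma_{V(\fraka)}R)$ is the local cohomology of $R$, not its $\fraka$-torsion submodule), and even granting some such compatibility it would not give the reverse inclusion, since $\frakp\in\supp_R M$ does not force the $\frakp$-torsion of $M_{\frakp}$ to be nonzero (take $M=R$). The inclusion $\SuppBIK(t)\subseteq\bigcup_c\supp_R\Hom^*_{\cat T}(c,t)$ is the easy, general one (localization at $\frakp$ commutes with $\Hom^*$ out of a compact object, and compacts detect vanishing); the reverse inclusion is exactly where the paper invokes \cite[Theorem~5.5 and Lemma~2.2(1)]{BensonIyengarKrause08}, while the forward one is \cite[Corollary~5.3 and Lemma~2.2(1)]{BensonIyengarKrause08}, with finite generation identifying $\supp_R$ with $\cal V(\Ann_R)$. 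Replacing your heuristic justification by these citations (and keeping your final passage along the homeomorphism $\rho_{\cat T}$ via \cite[Theorem~9.3]{Zou23}, which is fine) repairs the argument and recovers the paper's proof.
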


\begin{proof}
First we claim that $H^*_{c}(t)$ is finitely generated for any $c \in \cat T^c$. Indeed, since we have $H^*_{c^{\vee} \otimes t}(-) \cong H^*_t(c \otimes -)$ by adjunction, $c^{\vee} \otimes t$ also satisfies the condition in \cref{lem:finite-generation}, so $H^*_{c}(t) \cong H^*_{\unit}(c^{\vee} \otimes t)$ is finitely generated. It then follows from \cite[Corollary~5.3 and Lemma~2.2(1)]{BensonIyengarKrause08} that
\[
\SuppBIK(t) \subseteq \bigcup_{c \in \cat T^c} \cal V(\Ann_R H^*_{c}(t))
\]
Moreover, by \cite[Theorem~5.5 and Lemma~2.2(1)]{BensonIyengarKrause08} we have
\[
\bigcup_{c \in \cat T^c} \cal V(\Ann_R H^*_{c}(t)) \subseteq \SuppBIK(t).
\]
Hence, $\SuppBIK(t) = \bigcup_{c \in \cat T^c} \cal V(\Ann_R H^*_{c}(t))$ is specialization closed. Since the comparison map $\rho$ is a homeomorphism by our assumption, we invoke \cite[Theorem~9.3]{Zou23} to conclude that $\Supp(t) = \rho \inv (\SuppBIK(t))$ is also specialization closed.
\end{proof}

\begin{Lem}\label{lem:notdualizable}
Suppose that $\cat C$ is local and cohomologically stratified. Let $S$ be a subset of $\Spc(\cat C^c)$ which contains the unique closed point $\cat M$ and which also contains a point $\cat Q$ such that $\overline{\{\cat Q\}} \nsubseteq S$ and $S \cap \overline{\{\cat Q\}} \setminus \{\cat Q\}$ is specialization closed. Then the localizing ideal $\cat C_{S \cap \overline{\{\cat Q\}}}$ is not dualizable.
\end{Lem}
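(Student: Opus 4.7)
The plan is to argue by contradiction: assume $\cat C_{T}$ is dualizable, where $T := S \cap \overline{\{\cat Q\}}$, and construct a compact object $a \in \cat C_T^c$ satisfying $\cat Q \in \Supp(a) \subseteq T$, in order to contradict \cref{cor:special-closed}. The key input will be \cref{thm:neeman--thomason} (Efimov's generalization of Neeman--Thomason to dualizable categories), which furnishes the sought-after lift from a compact object of a suitable Verdier quotient.

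First I would set the stage. Since $\cat M$ is the unique closed point of the local spectral space $\Spc(\cat C^c)$, every point specializes to $\cat M$; combined with $\cat M \in S$ this gives $\cat M \in T$. Moreover $\cat Q \ne \cat M$, for otherwise $\overline{\{\cat Q\}} = \{\cat M\} \subseteq S$, contrary to hypothesis. Because $T \setminus \{\cat Q\}$ is specialization closed, the localizing ideal $\cat C_{T \setminus \{\cat Q\}}$ is generated by a set of compacts $\cat A \subseteq \cat C^c$; these objects belong to $\cat C_T$ and remain compact there, so $\cat A \subseteq \cat C_T^c$ and $\Ind(\cat A) = \cat C_{T \setminus \{\cat Q\}}$ inside $\cat C_T$. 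By \cref{lem:quotient} the Verdier quotient is
\[
\cat C_T / \cat C_{T \setminus \{\cat Q\}} \simeq \cat C_{\{\cat Q\}}.
\]

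Next I would exhibit a nonzero compact object of $\cat C_{\{\cat Q\}}$. The singleton $\{\cat Q\}$ is convex, so \cref{prop:dualizable} implies that $\cat C_{\{\cat Q\}}$ is compactly generated. Since it is nonzero (containing $g_{\cat Q}$), there exists a nonzero compact object $y \in \cat C_{\{\cat Q\}}^c$. Applying \cref{thm:neeman--thomason} to the dualizable category $\cat C_T$ and the subcategory $\cat A$, we obtain $a \in \cat C_T^c$ with $q(a) \cong y \oplus \Sigma y \ne 0$, where $q \colon \cat C_T \to \cat C_{\{\cat Q\}}$ denotes the Verdier quotient. Hence $a \notin \cat C_{T \setminus \{\cat Q\}}$, so $\Supp(a) \not\subseteq T \setminus \{\cat Q\}$; combined with $\Supp(a) \subseteq T$ this forces $\cat Q \in \Supp(a) \subseteq T \subseteq S$.

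The contradiction is now at hand. Since $a$ is compact in $\cat C_T$ and $\cat C_{\{\cat M\}} \subseteq \cat C_T$ (because $\cat M \in T$), the functor $\Hom_{\cat C}(a,-)$ commutes with coproducts in $\cat C_{\{\cat M\}}$. Cohomological stratification of $\cat C$ then allows \cref{cor:special-closed} to apply to $a$, yielding that $\Supp(a)$ is specialization closed. Together with $\cat Q \in \Supp(a)$ this gives $\overline{\{\cat Q\}} \subseteq \Supp(a) \subseteq S$, contradicting the hypothesis $\overline{\{\cat Q\}} \not\subseteq S$. The main obstacle is to verify carefully that $\cat C_{T \setminus \{\cat Q\}}$ arises as $\Ind(\cat A)$ for some $\cat A \subseteq \cat C_T^c$, which is exactly what is needed so that \cref{thm:neeman--thomason} can be invoked inside the putative dualizable $\cat C_T$.
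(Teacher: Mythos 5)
Your proposal is correct and follows essentially the same route as the paper's proof: pass to the Verdier quotient $\cat C_{S\cap\overline{\{\cat Q\}}}/\cat C_{S\cap\overline{\{\cat Q\}}\setminus\{\cat Q\}} \simeq \cat C_{\{\cat Q\}}$, lift a nonzero compact object via \cref{thm:neeman--thomason}, and contradict \cref{cor:special-closed} using that the support of the lift is specialization closed yet contains $\cat Q$ and lies in $S$. The only cosmetic difference is that you deduce $\cat Q \in \Supp(a)$ directly from $a \notin \ker q = \cat C_{S\cap\overline{\{\cat Q\}}\setminus\{\cat Q\}}$, whereas the paper argues via the localization triangle $ii^R(t)\to t\to q^Rq(t)$ and a support computation; both are valid.
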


\begin{proof}
Write $S_1$ for $S \cap \overline{\{\cat Q\}}$ and $S_2$ for $S \cap \overline{\{\cat Q\}} \setminus \{\cat Q\}$ and consider the localization sequence
\[
\cat C_{S_2} \xhook{i} \cat C_{S_1} \xtwohead{q} \cat C_{S_1} / \cat C_{S_2}.
\]
Since $S_2$ is specialization closed, \cref{lem:quotient} implies
\[
\im q^R \simeq \cat C_{S_1} / \cat C_{S_2} \simeq \cat C_{S_1 \setminus S_2} = \cat C_{\{\cat Q\}}.
\]
Singletons are convex, so $\cat C_{S_1} / \cat C_{S_2}$ is compactly generated by \cref{prop:dualizable}. Suppose \emph{ab absurdo} that $\cat C_{S_1}$ is dualizable. In view of \cref{exa:finite-local}, $\cat C_{S_2}$ is generated by a set of compact objects of $\cat T$ and thus they are also compact in $\cat C_{S_1}$. Hence, we can summon the Neeman--Thomason localization theorem (\cref{thm:neeman--thomason}): Choosing any nonzero compact object $y$ of $\cat C_{S_1} / \cat C_{S_2}$, we find a compact object $t$ of $\cat C_{S_1}$ such that $q(t) \cong y \oplus \Sigma y$. Now consider the cofiber sequence
\[
ii^R(t) \to t \to q^R q(t).
\]
Note that $\Supp(q^R q(t)) \subseteq \{\cat Q\}$ and $\Supp(ii^R(t)) \subseteq S_2$. Since $q(t)$ is nonzero, we have $\Supp(q^R q(t)) = \{\cat Q\}$ by stratification. It then follows from \cite[Proposition~7.17(e)]{BalmerFavi11} that $\cat Q \in \Supp(t)$. Moreover, since $t$ is compact in $\cat C_{S_1}$ and $S_1$ contains $\cat M$, we see that $t$ satisfies the condition in \cref{lem:finite-generation} and hence $\Supp(t)$ is specialization closed by \cref{cor:special-closed}. This implies $\overline{\{\cat Q\}} \subseteq \Supp(t) \subseteq S$, which contradicts our hypothesis.
\end{proof}

\begin{proof}[Proof of \cref{thm:main}]
Since $\cat C$ is locally cohomologically stratified and $\Spc(\cat C^c)$ is noetherian, $\cat C$ is stratified by \cref{prop:local-coh-strat}. Therefore, in view of \cref{prop:dualizable}, it suffices to show that if $\cat C_S$ is dualizable then $S$ is convex. To this end, let $S$ be a nonconvex subset of $\Spc(\cat C^c)$ such that $\cat C_S$ is dualizable. By nonconvexity, there exist two points $\cat Q, \cat P \in S$ such that $\overline{\{\cat Q\}} \cap \gen(\cat P) \nsubseteq S$. Consider the set $\SET{\cat R \in S}{\overline{\{\cat R\}} \cap \gen(\cat P) \nsubseteq S}$. Being a nonempty subset of the noetherian space $\Spc(\cat C^c)$, it admits an element that is maximal with respect to the specialization order, which we still denote by $\cat Q$. Hence, we have
\[
\overline{\{\cat Q\}} \cap \gen(\cat P) \nsubseteq S \text{ and } \forall \cat R \in S \cap \overline{\{\cat Q\}} \setminus \{\cat Q\}\text{: }\overline{\{\cat R\}} \cap \gen(\cat P) \subseteq S.
\]
Consider the finite localization $q \colon \cat C \to \cat C_{\cat P} \eqqcolon \cat D$ at the prime $\cat P$ (recall \cref{exa:local-cat}). The associated map on spectra is an embedding $\varphi \colon \Spc(\cat D^c) \hook \Spc(\cat C^c)$ whose image is $\gen(\cat P)$. Writing $S'$ for $\varphi \inv (S)$ and $\cat Q'$ for $\varphi \inv (\cat Q)$, and working in $\Spc(\cat D^c)$, we have
\[
\overline{\{\cat Q'\}} \nsubseteq S' \text{ and } \forall \cat R' \in S' \cap \overline{\{\cat Q'\}} \setminus \{\cat Q'\}\text{: } \overline{\{\cat R'\}} \subseteq S'.
\]
Note that $\cat D$ is cohomologically stratified since $\cat C$ is locally cohomologically stratified. Hence, $\cat D$, $S'$, and $\cat Q'$ satisfy the conditions in \cref{lem:notdualizable}. It follows that $\cat D_{S' \cap \overline{\{\cat Q'\}}}$ is not dualizable.

On the other hand, applying the functor $-\otimes_{\cat C} \cat D$ to the inclusion $\cat C_S \hook \cat C$, by \cref{lem:fully-faithful} we obtain a fully faithful functor
\[
\cat C_{S} \otimes_{\cat C} \cat D \hook \cat C \otimes_{\cat C} \cat D \simeq \cat D
\]
with essential image $q(\cat C_S)$. Note that
\[
\Supp(q(\cat C_S)) = \varphi \inv (\Supp(\cat C_S)) = \varphi \inv (S) = S'
\]
where the first equality follows from \cite[Corollary~5.30]{Zou23} and the second is due to the stratification of $\cat C$. This tells us $\cat C_{S} \otimes_{\cat C} \cat D \simeq q(\cat C_S) = \cat D_{S'}$ by the stratification of~$\cat D$. \cref{lem:dualizability} then implies that $\cat D_{S'}$ is dualizable. Now consider the colocalization $i^R \colon \cat D \to \cat D_{\overline{\{\cat Q'\}}}$. By \cref{lem:fully-faithful} again, we obtain a fully faithful functor
\[
\cat D_{S'} \otimes_{\cat D} \cat D_{\overline{\{\cat Q'\}}} \hook \cat D \otimes_{\cat D} \cat D_{\overline{\{\cat Q'\}}} \simeq \cat D_{\overline{\{\cat Q'\}}}
\]
with essential image $i^R(\cat D_{S'})$. Note that
\[
\Supp(ii^R(\cat D_{S'})) = \overline{\{\cat Q'\}} \cap \Supp(\cat D_{S'}) = \overline{\{\cat Q'\}} \cap S'
\]
where the first equality is by \eqref{eq:supp-finite-loc} and the second by the stratification of $\cat D$. We thus have $\cat D_{S'} \otimes_{\cat D} \cat D_{\overline{\{\cat Q'\}}} \simeq i^R(\cat D_{S'}) = \cat D_{S'\cap\overline{\{\cat Q'\}}}$ by the stratification of $\cat D$ again. Invoking \cref{lem:dualizability} one more time, we see that $\cat D_{S'\cap\overline{\{\cat Q'\}}}$ is dualizable, which is absurd.
\end{proof}

\begin{Rem}\label{rem:proof}
Our proof of \cref{thm:main} is inspired by \cite[Theorem~3.10]{Efimov24} where Efimov establishes a one-to-one correspondence between the dualizable localizing ideals of the derived category $\Der(A)$ of any commutative noetherian ring $A$ and the convex subsets of the spectrum $\Spec(A)$. A key result in Efimov’s proof is \cite[Lemma~3.12]{Efimov24} which shows that in the case $\cat C = \Der(A)$, the object $t$ in \cref{lem:finite-generation} is compact in $\cat C$. This is a strictly stronger statement than our conclusion in \cref{cor:special-closed} that the support of $t$ is specialization closed in $\Spc(\cat C^c)$. However, the latter is sufficient to establish the desired \cref{lem:notdualizable} which corresponds to \cite[Lemma~3.13]{Efimov24}. Another important result due to Efimov is the the extension of the Neeman--Thomason localization theorem from compactly generated categories to dualizable categories (\cref{thm:neeman--thomason}). In the absence of this generalization, the proof of \cref{thm:main} only establishes a one-to-one correspondence between the convex subsets of $\Spc(\cat C^c)$ and the localizing ideals of $\cat C$ which are compactly generated.
\end{Rem}

\begin{Rem}
As explained in \cite[Remark~3.14]{Efimov24}, the classification of dualizable localizing ideals via convex subsets can fail for derived categories of non-noetherian rings. Nevertheless, it does hold in some cases. For example, consider the derived category $\Der(A)$ of any non-noetherian semi-artinian absolutely flat ring. It is stratified by \cite[Theorem~6.3]{Stevenson17}. Moreover, every subset of $\Spec(A)$ is specialization closed since $A$ has Krull dimension 0. Therefore, in light of \cref{exa:finite-local}, every localizing ideal of $\Der(A)$ is generated by a set of compact objects. In particular, all localizing ideals of $\Der(A)$ are dualizable and all subsets of~$\Spec(A)$ are convex.
\end{Rem}

\begin{Cor}
If $\cat C$ is cohomologically stratified then we have a bijection
\[
\{\text{dualizable localizing ideals of }\cat C\} \xra{\sim} \{\text{convex subsets of }\Spc(\cat C^c)\}
\]
\end{Cor}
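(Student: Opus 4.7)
The corollary follows almost immediately from \cref{thm:main} together with \cref{prop:local-coh-strat}. My plan is to verify the two hypotheses of \cref{thm:main}, namely that $\cat C$ is locally cohomologically stratified and that $\Spc(\cat C^c)$ is noetherian, and then invoke the theorem.

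First, by definition of cohomological stratification (\cref{def:coh-strat}), the graded ring $R_{\cat T}$ is noetherian and the comparison map $\rho_{\cat T} \colon \Spc(\cat C^c) \to \Spec(R_{\cat T})$ is a homeomorphism. Since the homogeneous Zariski spectrum of a noetherian graded ring is a noetherian topological space, it follows that $\Spc(\cat C^c)$ is noetherian. Second, the implication (a) $\implies$ (b) of \cref{prop:local-coh-strat} tells us that a cohomologically stratified category with noetherian Balmer spectrum is locally cohomologically stratified. Thus both hypotheses of \cref{thm:main} are satisfied, and the bijection follows.

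There is essentially no obstacle here, since the work has already been done in \cref{prop:local-coh-strat} and \cref{thm:main}; the corollary is a packaging of these results for the reader's convenience. The only thing worth spelling out is the noetherianity of $\Spc(\cat C^c)$, which is immediate from the homeomorphism with $\Spec(R_{\cat T})$.
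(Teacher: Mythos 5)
Your proof is correct and is essentially the paper's own argument: the corollary is deduced from \cref{thm:main} via the implication (a) $\implies$ (b) of \cref{prop:local-coh-strat}, whose conclusion already includes the noetherianity of $\Spc(\cat C^c)$ (your explicit check via the homeomorphism $\rho$ with $\Spec(R_{\cat T})$ just spells out that part).
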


\begin{proof}
This follows from \cref{thm:main} by \cref{prop:local-coh-strat}.
\end{proof}

\begin{Exa}
Cohomologically stratified categories abound. The reader is invited to consult \cite{BensonIyengarKrause11a, BensonIyengarKrause11b, DellAmbrogioStanley16, BensonIyengarKrausePevtsova18, Barthel21, Barthel22, Lau23, BIKP24, BCHNPS24, BBIKP25, BCHNP25} for many such examples. We end this paper with an example which is not cohomologically stratifed:
\end{Exa}

\begin{Exa}
The derived category $\Derqc(X)$ of any noetherian scheme is locally cohomologically stratified (\cref{exa:dx-loc-coh-strat}). Moreover, $\Spc(\Derperf(X)) \cong X$ is noetherian. Hence, \cref{thm:main} applies.
\end{Exa}

\raggedbottom 

\bibliographystyle{alpha}\bibliography{convex}

\end{document}